\newtheorem{theorem}{Theorem}[section]
\newtheorem{corollary}[theorem]{Corollary}
\theoremstyle{definition}
\theoremstyle{remark}
\def\numberlikeadb{\global\def\theequation{\thesection.\arabic{equation}}}
\newcommand{\eqa}{\begin{eqnarray}}
\newcommand{\ena}{\end{eqnarray}}
\newcommand{\eq}{\begin{equation}}
\newcommand{\en}{\end{equation}}
\newcommand{\eqs}{\begin{eqnarray*}}
\newcommand{\ens}{\end{eqnarray*}}
\def\ignore#1{}
\begin{document}
\title[]{Limiting the spread of disease through altered migration patterns}
\maketitle
\noindent R. McVINISH, P.K. POLLETT and A. SHAUSAN \\
School of Mathematics and Physics, University of Queensland \\


\noindent ABSTRACT. \ 
We consider a model for an epidemic in a population that occupies geographically distinct locations. The disease is spread within subpopulations by contacts between infective and susceptible individuals, and is spread between subpopulations by the migration of infected individuals. We show how susceptible individuals can act collectively to limit the spread of disease during the initial phase of an epidemic, by specifying the distribution that minimises the growth rate of the epidemic when the infectives are migrating so as to maximise the growth rate. We also give an explicit strategy that minimises the basic reproduction number, which is also shown be optimal in terms of the probability of extinction and total size of the epidemic.  \\


\noindent\emph{Key words}:  basic reproduction number; branching process; expected total size; minimax optimisation \\
\noindent\emph{MSC 2010}:  92D30; 90C47; 60J85 

\section{Introduction}
Recently, a number of papers have been devoted to the issue of controlling disease outbreaks. Typical mechanisms for control involve treatments which speed recovery \citep{NMG:11,RLG:09}, culling of infected individuals \citep{NMG:10}, reducing the density of disease vectors \citep{MYOS:14}, vaccination programs \citep{KBMG:12,KLG:11} and quarantine \citep{RLG:09}. When the population has some spatial structure, migration also plays an important role in disease spread and provides a further control mechanism.  

A common approach to incorporating spatial structure in epidemic modelling is to impose a metapopulation structure on the population \citep[see][for example]{DBR:07,GH:97,GS:09,Hess:96}. In a metapopulation, the population is divided into a number of subpopulations occupying geographically distinct locations. The disease is spread within a subpopulation by contacts between infective and susceptible individuals and is spread between subpopulations by the migration of infected individuals.

The effect of migration rates on disease spread in metapopulations has been investigated in a number of papers. Due to the complexity of these models, control strategies are often based on minimising the basic reproduction number $ R_{0}$. Studying a multi-patch frequency dependent SIS model, \citet{ABLN:07} note that the rapid movement of infective individuals can lead to disease extinction in low risk environments. Furthermore, they conjecture that $ R_{0} $ is a decreasing function of the diffusion rate for infective individuals. \citet[Theorem 4.2]{HvdDW:07} note a similar result for their two-patch SEIRP model and a similar phenomena has been observed in  population models with spatially heterogeneous environments \citep{Hastings:83}.  However, \citet[section 4]{GR:12} have shown that for other models the dependence of $ R_{0} $ on migration rates can be more complex. To investigate the effect of the migration rates on other quantities such as the number of infected individuals, numerical methods are generally required \citep[for example][]{SNvGR:12}.

In this paper, we examine how susceptible individuals can act collectively to limit the spread of disease during the initial phase of an epidemic. More specifically, we consider how susceptible individuals can distribute themselves in the metapopulation in a way that minimises the growth of the epidemic when the infectives migrate so as to maximise the growth. By formulating the problem as a minimax optimisation and focusing on the susceptible individuals, we avoid the need to distinguish between infected and susceptible individuals when applying controls to the population. This is advantageous as identification of infected individuals can be problematic due to factors such as delays in the onset of symptoms, asymptomatic carriers and costs associated with testing. Furthermore, acute disease can have a significant effect on the behaviour of animals \citep{Hart:88}. This is particularly true for certain parasitic diseases where the parasite attempts to force the host to act in a manner which assists the propagation of the parasite \citep{Adamo:13}. 

In Section 2 we give our main results. Instead of using an ODE model for the epidemic as was done in the papers cited above, our analysis is based on a branching process model. Branching processes are known to provide a good approximation to the standard SIR and SIS Markov chain models  when the number of infectives is initially small \citep{Clancy:96}. Using this model, we are able to give an explicit strategy that minimises the expected rate of growth under a certain condition on the recovery and infection rates. We also give an explicit strategy that minimises the basic reproduction number which does not require this extra condition. This later strategy is shown to also be optimal in terms of the probability of extinction and total size of the epidemic. In Section 3, the problem of minimising the expected growth rate is investigated numerically. The paper concludes with a discussion of how the results depend on contact rates and how they relate to ODE models.

\section{Minimising disease spread in the initial stages} \label{Sec:Main}

Consider a closed population of size $ N $ divided into $ m $ groups such that at time~$ t $ group~$ i $ contains $ X_{i}(t) $ susceptibles and $ Y_{i}(t) $ infectives. Each individual, conditional on its disease status, moves independently between groups according to an irreducible Markov process on $ \{1,\ldots,m\} $ with transition rate matrix $ R $ if it is susceptible and transition rate matrix $ Q $ if it is infected. The epidemic evolves as a Markov process. Contacts between individuals in the same group are assumed to be density dependent \citep{BBBFHT:02}.  More precisely, a pair of individuals  in group $ i $ makes contact at the points of a Poisson process of rate $ \beta_{i}/ N $ with contacts between distinct pairs of individuals being mutually independent. It is assumed that contact between an infective and a susceptible results in the infection of the susceptible. An infected individual in group $ i $ recovers with immunity at a rate~$ \gamma_{i} $.  Since we are primarily concerned with the initial phase of the epidemic, our conclusions remain valid for epidemics where individuals recover without immunity.

In the absence of infective individuals, the entirely susceptible population evolves following a closed (linear) migration process with per-capita migration rates~$ R $. If the population is in equilibrium, then the probability that an individual is in group~$ i $ is given by $ \pi_{i} $ where $ \pi $ is the unique solution to $ \pi R = 0 $ subject to the constraint $ \pi {\bf 1} = 1 $. 

We consider the spread of the disease from a small number of initial infective individuals. \citet[Theorem 2.1]{Clancy:96} shows that, when $ N $ is large, the epidemic can be approximated by a multi-type branching process. Assuming the susceptible population is in equilibrium, the branching process for the number of infective individuals is given by
\begin{align}
(Y_{1},\ldots,Y_{m}) & \rightarrow (\ldots,Y_{i}+1, \ldots, Y_{j} - 1, \ldots)  & \mbox{at rate } & Q_{ji} Y_{j}, \label{Model:Eq1}\\
(Y_{1},\ldots,Y_{m}) & \rightarrow (\ldots,Y_{i} + 1, \ldots) & \mbox{at rate } & \beta_{i}  \pi_{i} Y_{i}, \label{Model:Eq2}\\
(Y_{1},\ldots,Y_{m}) & \rightarrow (\ldots,Y_{i} - 1, \ldots) & \mbox{at rate } & \gamma_{i}Y_{i}. \label{Model:Eq3}
\end{align}
Note that the branching process depends on $ R $ only through the equilibrium distribution $ \pi $. 

Suppose that the susceptible population aims to minimise some quantity $ f(\pi,Q) $, calculated from the branching process determined by (\ref{Model:Eq1})-(\ref{Model:Eq3}). Let $ \mathcal{S} $ denote the relative interior of the $ (m-1) $-simplex and let $ \mathcal{Q} $ be the set of irreducible migration rate matrices. Without imposing any constraints on the movements of the infectives, the susceptible population can choose $ \pi $ such that, for any $ \epsilon > 0 $, a value no larger than $ \inf_{\pi\in\mathcal{S}} \sup_{Q \in \mathcal{Q}} f(\pi,Q)  + \epsilon $ is attained. On the other hand, the infectives can migrate in such a way that, for any $ \epsilon > 0 $, a value no smaller than $ \sup_{Q \in \mathcal{Q}} \inf_{\pi\in\mathcal{S}} f(\pi,Q) - \epsilon $ is attained. In general,
$$
\sup_{Q\in\mathcal{Q}} \inf_{\pi\in\mathcal{S}} f(\pi,Q) \leq \inf_{\pi\in\mathcal{S}} \sup_{Q\in\mathcal{Q}} f(\pi,Q)
$$
\citep[Lemma in section 1.2.2]{PZ:96}. A pair $ (\pi^{\ast},Q^{\ast}) \in \mathcal{S}\times\mathcal{Q} $ such that 
$$
f(\pi^{\ast},Q) \leq f(\pi^{\ast},Q^{\ast}) \leq f(\pi,Q^{\ast}),
$$
for all $ \pi \in \mathcal{S} $ and all $ Q \in \mathcal{Q}$ is called a saddle point for $ f $. If a saddle point exists, then
$$
\min_{\pi\in\mathcal{S}} \sup_{Q \in \mathcal{Q}} f(\pi,Q) = \max_{Q\in\mathcal{Q}} \inf_{\pi\in\mathcal{S}} f(\pi,Q)
$$
\citep[Theorem in section 1.3.4]{PZ:96}. The susceptibles can attain this value by distributing themselves amongst the groups according to $ \pi^{\ast} $. When a saddle point for $ f $ does not exist, there may still be an $ \epsilon$-saddle point, that is, for every $ \epsilon > 0 $ there exists a pair $ (\pi^{\epsilon}, Q^{\epsilon}) \in \mathcal{S}\times\mathcal{Q} $ such that 
$$
f(\pi^{\epsilon},Q) - \epsilon \leq f(\pi^{\epsilon},Q^{\epsilon}) \leq f(\pi,Q^{\epsilon}) +\epsilon,
$$
for all $ \pi \in \mathcal{S} $ and all $ Q \in \mathcal{Q}$. The existence of an $ \epsilon$-saddle point implies that  
$$
\inf_{\pi\in\mathcal{S}} \sup_{Q \in \mathcal{Q}} f(\pi,Q) = \sup_{Q\in\mathcal{Q}} \inf_{\pi\in\mathcal{S}} f(\pi,Q) = \lim_{\epsilon\rightarrow 0} f(\pi^{\epsilon},Q^{\epsilon})
$$
\citep[Theorem in section 2.2.5]{PZ:96}. In the following, we determine the $(\epsilon)$-saddle points for four quantities derived from the branching process (\ref{Model:Eq1})-(\ref{Model:Eq3}).

As mentioned in the introduction, this formulation avoids the need to distinguish between susceptible and infected individuals in the application of controls. To illustrate this point, suppose that susceptible individuals normally move between groups following a Markov process with migration rate matrix $ R $. The optimal distribution of susceptibles $ \pi^{\ast} $ can be obtained by border controls where a migrating individual from group $ j $ going to group $ i $ is given admittance with probability $ p_{ji} $ and otherwise returned to group $ j $. Detailed balance equations show that the optimal distribution for susceptibles is obtained if the admittance probabilities satisfy
$$
R_{ji} p_{ji} \pi^{\ast}_{j} = R_{ij} p_{ij} \pi^{\ast}_{i},
$$
for all $ i,j $. Although  the border controls will have an effect on the migration rate of infected individuals if they are applied to the population as a whole, the optimal distribution for susceptible individuals ensures that the growth of the epidemic can be no greater than $ \min_{\pi\in\mathcal{S}} \sup_{Q\in\mathcal{Q}} f(\pi,Q) $. 

\subsection{Minimising the expected growth rate}

Let $ \{M(t); t\geq 0\} $ be the mean matrix semigroup
\[
M_{ij}(t) = \mathbb{E}\left(Y_{j}(t) \mid Y_{r}(0) = \delta_{ri},\ r=1,2,\ldots ,m\right),
\]
for $ i,j \in \{1,2,\ldots,m\} $ and where the $ \delta_{ij} $ are Kronecker deltas. For a vector $ \alpha \in \mathbb{R}^{m}$, let $ \mbox{diag}(\alpha)$ denote the $ m\times m$ diagonal matrix with $ \mbox{diag}(\alpha)_{ii} = \alpha_{i} $, $i=1,\ldots,m$. The mean matrix semigroup has the infinitesimal generator
\[ 
A(\pi,Q) = \mbox{diag}(\beta)\mbox{diag}(\pi) - \mbox{diag}(\gamma) + Q
\]
and $ M(t) = \exp (A(\pi,Q) t) $ for $ t \geq 0 $ \citep{Athreya:68}. By \citet[Theorem~2.7]{Seneta:81}, if $ Q $ is irreducible, then
\[
\exp\left[A(\pi,Q) t\right] = (1 + o(1))\exp\left[\tau(\pi,Q) t\right] {\bf w} {\bf v}^{T},
\]
elementwise as $t\to\infty$,
where $ {\bf w} $ and $ {\bf v}^{T} $ are the left and right eigenvectors of $ A(\pi,Q) $ corresponding to the dominant eigenvalue $ \tau(\pi,Q) $ and normed so that $ {\bf v}^{T} {\bf w} = 1 $. Since $ {\bf w} $ and $ {\bf v} $ are both strictly positive \citep[Theorem~2.6(b)]{Seneta:81}, $ \tau(\pi,Q) $ is the growth rate of the expected number of infected individuals during the initial stages of the epidemic.

The following result shows that, under a certain condition on the recovery and infection rates, there is an optimal distribution of susceptible individuals which minimises the growth rate. 

\begin{theorem} \label{thm1}
Let $ \chi(\beta,\gamma) = (1 - \sum_{j=1}^{m} \gamma_{j}/\beta_{j})(\sum_{j=1}^{m} \beta^{-1}_{j})^{-1} $ and define $ \pi^{\ast}_{i} = (\gamma_{i} + \chi(\beta,\gamma))/\beta_{i} $ for $ i = 1,\ldots,m $. If $ \gamma_{i} > -\chi(\beta,\gamma) $ for all $ i=1,\ldots,m $, then $ \pi^{\ast} \in \mathcal{S}$  and there exists a $ Q^{\ast} \in \mathcal{Q} $ such that 
\begin{equation}
\tau(\pi,Q^{\ast}) \geq \tau(\pi^{\ast},Q^{\ast}) = \tau(\pi^{\ast}, Q) = \chi(\beta,\gamma), \label{thm1:eq1}
\end{equation}
for all $ \pi \in \mathcal{S} $ and all $ Q \in \mathcal{Q} $. 
\end{theorem}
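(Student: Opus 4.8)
The plan is to exhibit the saddle point $(\pi^\ast,Q^\ast)$ explicitly; the two right-hand equalities in (\ref{thm1:eq1}) are short computations, and the real content is the inequality $\tau(\pi,Q^\ast)\ge\chi(\beta,\gamma)$. I would first dispose of the susceptible side. The value $\pi^\ast_i$ is chosen precisely so that $\beta_i\pi^\ast_i-\gamma_i=\chi(\beta,\gamma)$ for every $i$, whence $A(\pi^\ast,Q)=\mbox{diag}(\beta)\mbox{diag}(\pi^\ast)-\mbox{diag}(\gamma)+Q=\chi(\beta,\gamma)I+Q$. Since $Q$ is an irreducible migration rate matrix, $Q{\bf 1}=0$ and $0$ is the dominant eigenvalue of the irreducible generator $Q$; adding $\chi(\beta,\gamma)I$ shifts every eigenvalue by $\chi(\beta,\gamma)$, so $\tau(\pi^\ast,Q)=\chi(\beta,\gamma)$ for every $Q\in\mathcal{Q}$, in particular for the $Q^\ast$ chosen next. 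Feasibility $\pi^\ast\in\mathcal{S}$ is immediate: $\pi^\ast_i=(\gamma_i+\chi(\beta,\gamma))/\beta_i>0$ by the hypothesis $\gamma_i>-\chi(\beta,\gamma)$, and $\sum_i\pi^\ast_i=\sum_i\gamma_i/\beta_i+\chi(\beta,\gamma)\sum_i\beta_i^{-1}=1$ by the definition of $\chi(\beta,\gamma)$.

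For $Q^\ast$ I would take any irreducible migration rate matrix whose stationary distribution is $\nu^\ast$, with $\nu^\ast_i=c/\beta_i$ and $c=(\sum_j\beta_j^{-1})^{-1}$ (so $\nu^\ast\in\mathcal{S}$); such a matrix exists, for instance $Q^\ast_{ij}=\nu^\ast_j$ for $i\ne j$ and $Q^\ast_{ii}=\nu^\ast_i-1$ when $m\ge2$, the case $m=1$ being trivial, and one checks directly that $\nu^\ast Q^\ast=0$ and $Q^\ast{\bf 1}=0$. The reason for this particular $\nu^\ast$ is that the stationary average of the net per-capita growth rates $\beta_i\pi_i-\gamma_i$ becomes independent of $\pi$: for every $\pi\in\mathcal{S}$,
\[
\sum_{i=1}^m\nu^\ast_i(\beta_i\pi_i-\gamma_i)=c\sum_i\pi_i-c\sum_i\gamma_i/\beta_i=c\left(1-\sum_j\gamma_j/\beta_j\right)=\chi(\beta,\gamma).
\]
So it suffices to prove $\tau(\pi,Q^\ast)\ge\sum_i\nu^\ast_i(\beta_i\pi_i-\gamma_i)$ for each $\pi\in\mathcal{S}$.

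For this, set $D=\mbox{diag}(\beta_1\pi_1-\gamma_1,\ldots,\beta_m\pi_m-\gamma_m)$ so that $A(\pi,Q^\ast)=D+Q^\ast$, and let $\{X_s\}_{s\ge0}$ be the Markov chain on $\{1,\ldots,m\}$ generated by $Q^\ast$. By the Feynman--Kac representation for finite-state chains, $[\exp((D+Q^\ast)t)]_{ij}=\mathbb{E}_i[\exp(\int_0^t D_{X_sX_s}\,ds)\,{\bf 1}\{X_t=j\}]$; summing over $j$ and taking $X_0$ distributed as $\nu^\ast$ gives, by Jensen's inequality and stationarity,
\[
\nu^{\ast T}\exp\!\left((D+Q^\ast)t\right){\bf 1}
=\mathbb{E}_{\nu^\ast}\!\left[\exp\!\left(\int_0^t D_{X_sX_s}\,ds\right)\right]
\ \ge\ \exp\!\left(t\sum_i\nu^\ast_iD_{ii}\right).
\]
On the other hand, the Seneta asymptotics quoted above give $\exp((D+Q^\ast)t)=(1+o(1))\exp(\tau(\pi,Q^\ast)t)\,{\bf w}{\bf v}^T$ elementwise with ${\bf w},{\bf v}>0$, so the left-hand side grows like $\exp(\tau(\pi,Q^\ast)t)$ up to a positive multiplicative constant; comparing exponential rates as $t\to\infty$ yields $\tau(\pi,Q^\ast)\ge\sum_i\nu^\ast_iD_{ii}=\chi(\beta,\gamma)$, which together with the first paragraph is exactly (\ref{thm1:eq1}).

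I expect the lower bound of the third paragraph to be the only genuine obstacle: one must produce a $Q^\ast$ whose best response can be bounded below uniformly in $\pi$, and the choice $\nu^\ast_i\propto\beta_i^{-1}$ is precisely what forces the relevant stationary average to equal $\chi(\beta,\gamma)$ for every $\pi$, after which Feynman--Kac and Jensen turn it into the eigenvalue inequality. (An alternative route to $\tau(\pi,Q^\ast)\ge\sum_i\nu^\ast_iD_{ii}$ is convexity of the dominant eigenvalue of $D+Q^\ast$ as a function of the diagonal of $D$, via the supporting-hyperplane inequality at a constant diagonal, where the eigenvalue equals that constant and has gradient $\nu^\ast$.) Everything else is routine verification.
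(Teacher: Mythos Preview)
Your argument is correct, and the choice of $\pi^\ast$ and $Q^\ast$ agrees with the paper's (the paper takes $Q^\ast_{ij}=\beta_j^{-1}$ for $i\ne j$, which is your choice up to a positive scalar multiple and hence has the same stationary distribution $\nu^\ast_i\propto\beta_i^{-1}$). The equalities in (\ref{thm1:eq1}) are handled identically.

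The genuine difference is in the proof of the inequality $\tau(\pi,Q^\ast)\ge\chi(\beta,\gamma)$. The paper invokes Friedland's result that $\tau(\cdot,Q)$ is strictly convex on $\mathcal{S}$, writes down the eigenvector perturbation formula $\partial\tau/\partial\pi_i=\beta_i w_i v_i/({\bf w}^T{\bf v})$, observes that at $\pi^\ast$ the right eigenvector is ${\bf 1}$, and then checks the first-order optimality condition $\sum_i\beta_i w_i(\pi_i-\pi^\ast_i)\ge0$ by choosing $Q^\ast$ so that its left Perron vector satisfies $w_i\propto\beta_i^{-1}$, making the sum identically zero. Your route instead bounds the dominant eigenvalue of $D+Q^\ast$ from below by the stationary average $\sum_i\nu^\ast_i D_{ii}$ via Feynman--Kac and Jensen, and then observes that this average equals $\chi(\beta,\gamma)$ for every $\pi$ by the choice of $\nu^\ast$. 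Your parenthetical alternative (convexity in the diagonal, supporting hyperplane at a constant diagonal with gradient $\nu^\ast$) is in fact the paper's argument in disguise, since the diagonal of $A(\pi,Q^\ast)$ is affine in $\pi$. What the paper's convexity argument buys is a little more: strict convexity shows $\pi^\ast$ is the \emph{unique} minimiser of $\tau(\cdot,Q^\ast)$, whereas your probabilistic bound only gives the inequality. What your argument buys is self-containment: it avoids citing Friedland's theorem and the eigenvalue perturbation formula, replacing them with a one-line application of Jensen under stationarity.
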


\begin{proof}
The final equality in (\ref{thm1:eq1}) holds as, for any $ Q \in \mathcal{Q}$, $ A(\pi^{\ast},Q) {\bf 1} =  \chi(\beta,\gamma) {\bf 1} $. From \citet[Corollary 3, page 52]{Seneta:81}, $ \tau(\pi^{\ast},Q) = \chi(\beta,\gamma) $. 

To prove the inequality in (\ref{thm1:eq1}), fix $ Q \in \mathcal{Q}$. As $ \mathcal{S} $ is a convex set, it follows from \citet[Theorem~4.1]{Friedland:81} that $ \tau(\cdot,Q) $ is a strictly convex functional on $ \mathcal{S} $ . Therefore, $ \hat{\pi}(Q) $ will minimise $ \tau(\cdot,Q) $ if and only if
\[
\sum_{i=1}^{m} \frac{\partial \tau(\pi,Q) }{\partial \pi_{i}} \Big|_{\pi = \hat{\pi}(Q)} \left(\pi_{i} - \hat{\pi}(Q)_{i} \right) \geq 0,
\]
for all $ \pi \in \mathcal{S}$. The partial derivatives of $ \tau(\cdot,Q) $ are
\[
 \frac{\partial \tau(\pi,Q) }{\partial \pi_{i}}  =  \frac{\beta_{i} w_{i} v_{i}}{{\bf w}^{T} {\bf v}},
\]
where $ {\bf w} $ and $ {\bf v} $ are the left and right eigenvectors of $ A(\pi,Q) $ corresponding to the dominant eigenvalue \citep[pg 183]{SS:90}.  As  $ \beta_{i} \pi^{\ast}_{i} - \gamma_{i} = \chi(\beta,\gamma) $ for $ i =1,\ldots,m $, the eigenvectors of $ A(\pi^{\ast},Q) $ and $ Q $ coincide so $ {\bf 1} $ is a right eigenvector of $ A(\pi^{\ast},Q)$ for any $ Q\in\mathcal{Q}$. Therefore, the inequality in (\ref{thm1:eq1}) will follow if there exists a  $ Q^{\ast} $ with left eigenvector $ {\bf w} $  such that
\begin{equation}
\sum_{i=1}^{m} \beta_{i} w_{i} \left(\pi_{i} - \pi^{\ast}_{i} \right) \geq 0, \label{thm1:eq2}
\end{equation}
for all $ \pi \in \mathcal{S}$. Set $ Q^{\ast}$ such that $ Q^{\ast}_{ij} = \beta_{j}^{-1} $ for $ i \neq j $. The left eigenvector of $ Q^{\ast}$ satisfies  $ w_{i} \propto \beta_{i}^{-1} $. Therefore, inequality (\ref{thm1:eq2}) holds which proves $ \tau(\pi,Q^{\ast}) \geq \tau(\pi^{\ast},Q^{\ast})  $ for all $ \pi \in \mathcal{S} $. 
\end{proof}

Under the condition of Theorem~\ref{thm1}, $ \pi^{\ast} $ is the distribution of susceptible individuals which minimises the expected growth rate of the epidemic when infected individuals move to maximise the expected growth rate of the epidemic. A corresponding optimal migration rate matrix for susceptibles can easily be determined, but the optimal migration rate matrix is not unique. For example, any transition rate matrix $ R $ satisfying the detailed balance equations $ R_{ij} \pi^{\ast}_{i} = R_{ji} \pi^{\ast}_{j} $, for all $ i,j \in\{1,\ldots,m\}$ has $ \pi^{\ast} $ as its equilibrium distribution. Many other constructions are possible. 

Theorem~\ref{thm1} excludes the boundary case where $ \gamma_{i} \geq -\chi(\beta,\gamma) $ for all $ i $ and $ \gamma_{j} = -\chi(\beta,\gamma) $ for some $ j $. The difficulty in this case is that $ \pi^{\ast} \not\in\mathcal{S} $ since $ \pi^{\ast}_{j} = 0 $ and there is no corresponding irreducible migration rate matrix for susceptibles. However, if we define, for any $ \epsilon > 0 $, $ \pi^{\epsilon} \in \mathcal{S} $ by $ \pi^{\epsilon}_{i} = (\gamma_{i} + \epsilon + \chi(\beta,\gamma))/\beta_{i} $ for all $ i $, then the calculations from the proof of Theorem~\ref{thm1} shows that $ (\pi^{\epsilon}, Q^{\ast}) $ is an $ \epsilon$-saddle point for $ \tau(\pi,Q) $.

The condition imposed in Theorem~\ref{thm1} will be satisfied provided the recovery rates do not vary too much between patches. In particular, if $ \gamma_{i} = \gamma $ for $ i=1,\ldots,m$ , then $ \chi(\beta,\gamma) = (\sum_{j=1}^{m} \beta_{j}^{-1})^{-1} - \gamma $, and the condition holds for any  $ \beta_{i},\ i=1,\ldots,m$. Also, if $ \chi(\beta,\gamma) \geq 0 $, then the condition must hold as the recovery rates are all nonnegative. On the other hand, when the condition does not hold, $ \chi(\beta,\gamma) < 0$. The following corollary shows that in this case there is a distribution of susceptible individuals for which the expected growth rate is negative, regardless of the movements of the infected individuals.

\begin{corollary} \label{cor1}
Suppose $ \chi(\beta,\gamma) < 0 $. Then $ \inf_{\pi \in \mathcal{S}} \sup_{Q\in\mathcal{Q}} \tau (\pi,Q) < 0 $. 
\end{corollary}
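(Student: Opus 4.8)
The plan is to exhibit a single distribution $\pi\in\mathcal S$ for which $\tau(\pi,Q)$ is bounded above by a strictly negative constant that does not depend on $Q$; this gives $\inf_{\pi\in\mathcal S}\sup_{Q\in\mathcal Q}\tau(\pi,Q)<0$ at once. The mechanism is that $Q$ is a migration rate matrix, so $Q\mathbf 1=\mathbf 0$, and hence the row sums of $A(\pi,Q)=\mathrm{diag}(\beta)\mathrm{diag}(\pi)-\mathrm{diag}(\gamma)+Q$ are $\beta_i\pi_i-\gamma_i$, independently of $Q$. Since $A(\pi,Q)$ is irreducible with non-negative off-diagonal entries, a standard Perron--Frobenius inequality (apply the bound $\rho(B)\le\max_i\sum_j B_{ij}$ for a non-negative matrix $B$ to $B=A(\pi,Q)+cI$ with $c$ large enough, noting that the spectral abscissa of $A(\pi,Q)$ is its dominant eigenvalue $\tau(\pi,Q)$) yields $\tau(\pi,Q)\le\max_{i}(\beta_i\pi_i-\gamma_i)$ for every $Q\in\mathcal Q$.

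First I would rewrite the hypothesis: $\chi(\beta,\gamma)<0$ is equivalent to $\sum_{j=1}^m\gamma_j/\beta_j>1$, which is immediate from the definition of $\chi$ together with $\sum_j\beta_j^{-1}>0$. This surplus is exactly what is needed to place $\pi$ strictly inside the box $\{\,\pi:0<\pi_i<\gamma_i/\beta_i,\ i=1,\ldots,m\,\}$ while keeping $\sum_i\pi_i=1$; concretely $\pi_i:=\frac{\gamma_i\beta_i^{-1}}{\sum_{j=1}^m\gamma_j\beta_j^{-1}}$ is positive, sums to one, and satisfies $\pi_i<\gamma_i/\beta_i$ because the denominator exceeds one, so $\pi\in\mathcal S$. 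For this $\pi$ one has $\beta_i\pi_i-\gamma_i<0$ for every $i$, hence $\max_i(\beta_i\pi_i-\gamma_i)=:-\delta$ with $\delta>0$.

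Putting the two steps together, $\sup_{Q\in\mathcal Q}\tau(\pi,Q)\le-\delta<0$, and therefore $\inf_{\pi\in\mathcal S}\sup_{Q\in\mathcal Q}\tau(\pi,Q)\le-\delta<0$, which is the assertion. (One could in fact show that $\sup_{Q\in\mathcal Q}\tau(\pi,Q)=\max_i(\beta_i\pi_i-\gamma_i)$ for every $\pi\in\mathcal S$, by letting the infectives migrate so as to concentrate in the patch with the largest value of $\beta_i\pi_i-\gamma_i$, but only the upper bound is needed here.)

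I do not anticipate a real obstacle; the one point that requires care is the \emph{uniformity in $Q$} of the upper bound on $\tau$. It works precisely because $Q$ contributes nothing to the row sums of $A(\pi,Q)$ -- a bound obtained from the Collatz--Wielandt formula with any non-constant test vector would blow up as the entries of $Q$ are varied. It is also worth recording that strict positivity of the recovery rates is genuinely used: if $\gamma_i=0$ for some patch $i$, then for every $\pi\in\mathcal S$ the infectives can migrate so as to spend nearly all of their time in patch $i$, forcing $\sup_{Q}\tau(\pi,Q)\ge\beta_i\pi_i>0$, and the corollary would fail.
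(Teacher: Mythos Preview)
Your proof is correct and follows essentially the same approach as the paper: both choose the distribution $\pi_i=\dfrac{\gamma_i/\beta_i}{\sum_j\gamma_j/\beta_j}$ (the paper's $\tilde\pi$), observe that the row sums of $A(\pi,Q)$ equal $\beta_i\pi_i-\gamma_i$ independently of $Q$ because $Q\mathbf 1=\mathbf 0$, and then bound $\tau(\pi,Q)$ above by the maximum row sum via a Perron--Frobenius argument (you spell out the shift trick, the paper cites \citet[Corollary~3, p.~52]{Seneta:81}). Your remarks on why uniformity in $Q$ holds and on the role of $\gamma_i>0$ are useful additions but not required for the argument.
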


\begin{proof}
Define $ \tilde{\pi}_{i} = \frac{\gamma_{i}}{\beta_{i}}  (\sum_{j=1}^{m} \frac{\gamma_{j}}{\beta_{j}})^{-1} $. Then for all $ Q \in \mathcal{Q} $, $(A(\tilde{\pi},Q) {\bf 1})_{i} = \gamma_{i}((\sum_{j=1}^{m} \frac{\gamma_{j}}{\beta_{j}})^{-1} -1 ),\ i=1,\ldots,m$. Now $ \chi(\beta,\gamma) < 0 $ implies $ (\sum_{j=1}^{m} \frac{\gamma_{j}}{\beta_{j}})^{-1} < 1 $. By \citet[Corollary 3, page 52]{Seneta:81}, $ \tau(\tilde{\pi},Q) < 0 $ for all $ Q \in \mathcal{Q}$. 
\end{proof}

\subsection{Optimising $R_0$, minor outbreak probability, and expected total size}

When the condition of Theorem~\ref{thm1} does not hold, an alternative approach to controlling the disease spread is needed. As noted in \citet[Section 4.1]{Clancy:96}, the total size of the branching process approximating the epidemic is the same as the total of an embedded Galton-Watson process. The behaviour of this Galton-Watson process is largely determined by the expected number of infectives produced by a single infective before its recovery. Denote by $ \Lambda_{ij}(\pi,Q) $ the expected number of infectives produced in group~$ j $ by an individual first infected in group~$ i $. Then from \citet[Section 4.1]{Clancy:96}, $ \Lambda(\pi,Q) =  L(Q) \mbox{diag}(\beta) \mbox{diag}(\pi) $, where $ L_{ij}(Q) $ is the expected amount of time that an individual who is first infected while in group~$ i $ spends in group~$ j $ before recovery. By \citet[Proposition 2]{PS:02}, $ L(Q) =  (\mbox{diag}(\gamma) - Q)^{-1} $. Therefore, 
\[
\Lambda(\pi,Q) =  \left(\mbox{diag}(\gamma) - Q \right)^{-1} \mbox{diag}(\beta) \mbox{diag}(\pi).
\]
The basic reproduction rate is the spectral radius of $ \Lambda(\pi,Q)$, which is denoted by $ R_{0}(\pi,Q) $. It is known that if $ R_{0}(\pi,Q) \leq 1 $, then the Galton-Watson process goes extinct in finite time with probability one. Minimising $ R_{0}(\pi,Q)$ provides an alternate means of limiting the spread of the disease.

\begin{theorem} \label{thm2}
Let $ \omega(\beta,\gamma) = (\sum_{j=1}^{m} \frac{\gamma_{j}}{\beta_{j}})^{-1} $ and define $ \tilde{\pi}_{i} = \frac{\gamma_{i}}{\beta_{i}} \omega(\beta,\gamma)  $ for $ i = 1,\ldots,m $. There exists a $ \tilde{Q} \in \mathcal{Q} $ such that 
\begin{equation}
R_{0}(\pi,\tilde{Q}) \geq R_{0}(\tilde{\pi},\tilde{Q}) = R_{0}(\tilde{\pi}, Q) = \omega(\beta,\gamma) , \label{thm2:eq1}
\end{equation}
for all $ \pi \in \mathcal{S} $ and all $ Q \in \mathcal{Q} $.
\end{theorem}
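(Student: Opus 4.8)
The plan is to follow the three-part pattern of the proof of Theorem~\ref{thm1}. Write $D_v=\mbox{diag}(v)$ for a vector $v$. First I would establish the middle equality in (\ref{thm2:eq1}) by showing $\mathbf{1}$ is a right Perron eigenvector of $\Lambda(\tilde{\pi},Q)$; then I would exhibit an explicit irreducible $\tilde{Q}$ for which $R_0(\cdot,\tilde{Q})$ is minimised over $\mathcal{S}$ at $\tilde{\pi}$. The obstacle is that the strict-convexity argument used for $\tau$ cannot be reused: $R_0(\cdot,Q)$ is only quasiconvex on $\mathcal{S}$ (its sublevel sets coincide with those of $\tau(\cdot,Q)$ after rescaling $\beta$), not convex, so a first-order condition is not by itself decisive. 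Instead I would recast $R_0$ so that the required inequality becomes a statement about the Perron root of a rank-one-plus-diagonal matrix, which is then settled by Jensen's inequality.

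\emph{Middle equality.} Since $\beta_i\tilde{\pi}_i=\omega(\beta,\gamma)\gamma_i$, we have $D_\beta D_{\tilde{\pi}}=\omega(\beta,\gamma)D_\gamma$ and hence $\Lambda(\tilde{\pi},Q)=\omega(\beta,\gamma)(D_\gamma-Q)^{-1}D_\gamma$. Because $Q\mathbf{1}=0$, $(D_\gamma-Q)\mathbf{1}=\gamma$, so $(D_\gamma-Q)^{-1}D_\gamma\mathbf{1}=(D_\gamma-Q)^{-1}\gamma=\mathbf{1}$ and therefore $\Lambda(\tilde{\pi},Q)\mathbf{1}=\omega(\beta,\gamma)\mathbf{1}$. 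As $(D_\gamma-Q)^{-1}>0$ when $Q$ is irreducible, $\Lambda(\tilde{\pi},Q)$ is nonnegative and irreducible with constant row sums $\omega(\beta,\gamma)$, so $R_0(\tilde{\pi},Q)=\omega(\beta,\gamma)$ for every $Q\in\mathcal{Q}$ by \citet[Corollary 3, page 52]{Seneta:81}; in particular $R_0(\tilde{\pi},\tilde{Q})=\omega(\beta,\gamma)$.

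\emph{The matrix $\tilde{Q}$ and the reduction.} Assume, as needed for $\tilde{\pi}\in\mathcal{S}$, that $\gamma_i>0$ for all $i$. The matrix $P(Q):=(D_\gamma-Q)^{-1}D_\gamma$ is a positive stochastic matrix, by the computation just made. For $\pi\in\mathcal{S}$ set $\mu_i=\pi_i/\tilde{\pi}_i>0$; then $D_\beta D_\pi=\omega(\beta,\gamma)D_\gamma D_\mu$, so $\Lambda(\pi,Q)=\omega(\beta,\gamma)P(Q)D_\mu$ and, since $\rho(AB)=\rho(BA)$,
\[
R_0(\pi,Q)=\omega(\beta,\gamma)\,\rho\!\left(D_\mu P(Q)\right),\qquad \textstyle\sum_i\tilde{\pi}_i\mu_i=1 .
\]
Now pick $\tilde{Q}$ so that $P(\tilde{Q})=tI+(1-t)\mathbf{1}\tilde{\pi}^{T}$ for a fixed $t\in(0,1)$; inverting the rank-one update gives the explicit irreducible migration rate matrix $\tilde{Q}=\tfrac{1-t}{t}D_\gamma(\mathbf{1}\tilde{\pi}^{T}-I)$, i.e.\ $\tilde{Q}_{ij}=\tfrac{1-t}{t}\gamma_i\tilde{\pi}_j$ for $i\neq j$ (one may take $t=\tfrac12$): its off-diagonal entries are positive and its row sums vanish.

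\emph{The estimate.} It remains to show $\rho(D_\mu P(\tilde{Q}))\ge 1$ whenever $\mu>0$ and $\sum_i\tilde{\pi}_i\mu_i=1$, with equality at $\mu=\mathbf{1}$. If $\max_i\mu_i\ge 1/t$ then the diagonal entry $\mu_i P(\tilde{Q})_{ii}\ge t\mu_i\ge 1$ already bounds $\rho$ below. Otherwise $t\mu_i<1$ for all $i$, and the Collatz--Wielandt bound $\rho(D_\mu P(\tilde{Q}))\ge\min_i (D_\mu P(\tilde{Q})x)_i/x_i$ with the test vector $x_i=\mu_i/(1-t\mu_i)>0$ reduces the claim to $\sum_i\tilde{\pi}_i\,\mu_i/(1-t\mu_i)\ge 1/(1-t)$, which is Jensen's inequality for the convex function $u\mapsto u/(1-tu)$ on $[0,1/t)$ with weights $\tilde{\pi}$ and mean $\sum_i\tilde{\pi}_i\mu_i=1$ (equality forcing $\mu\equiv\mathbf{1}$). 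Hence $R_0(\pi,\tilde{Q})=\omega(\beta,\gamma)\rho(D_\mu P(\tilde{Q}))\ge\omega(\beta,\gamma)=R_0(\tilde{\pi},\tilde{Q})$ for all $\pi\in\mathcal{S}$, which with the middle equality gives (\ref{thm2:eq1}). The only genuinely nontrivial step is spotting the right $\tilde{Q}$ — equivalently, realising that $R_0$ should be viewed through the stochastic matrix $P(Q)$ and the tilted weights $\mu$; everything after that is a short computation capped by a single use of Jensen.
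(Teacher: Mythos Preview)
Your argument is correct, but it proceeds along a genuinely different line from the paper's own proof, and it is worth recording the differences.

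\emph{What the paper does.} The paper parallels its proof of Theorem~\ref{thm1} directly: it invokes \citet[Theorem~4.3]{Friedland:81} to assert that $R_0(\cdot,Q)$ is \emph{strictly convex} on $\mathcal{S}$, computes $\partial R_0/\partial\pi_i$ in terms of the left/right Perron eigenvectors of $\Lambda(\pi,Q)$, and then verifies the first-order optimality condition at $\tilde{\pi}$ for the specific choice $\tilde{Q}_{ij}=\beta_j^{-1}$ ($i\neq j$). For that $\tilde{Q}$ one checks that the stationary distribution of $\tilde{Q}$ is proportional to $\beta^{-1}$, whence the left Perron eigenvector of $\Lambda(\tilde{\pi},\tilde{Q})$ satisfies $w_i\propto\gamma_i/\beta_i$ and $({\bf w}(D_\gamma-\tilde{Q})^{-1})_i\propto\beta_i^{-1}$, making the derivative test an equality.

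\emph{What you do differently.} You rewrite $R_0(\pi,Q)=\omega(\beta,\gamma)\rho(D_\mu P(Q))$ with $P(Q)=(D_\gamma-Q)^{-1}D_\gamma$ stochastic and $\mu_i=\pi_i/\tilde{\pi}_i$, then \emph{choose a different} $\tilde{Q}$, namely $\tilde{Q}_{ij}=\tfrac{1-t}{t}\gamma_i\tilde{\pi}_j$ ($i\neq j$), engineered so that $P(\tilde{Q})=tI+(1-t)\mathbf{1}\tilde{\pi}^{T}$. The inequality $\rho(D_\mu P(\tilde{Q}))\ge1$ then follows from a Collatz--Wielandt test vector together with Jensen's inequality for $u\mapsto u/(1-tu)$. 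This is self-contained and avoids any appeal to spectral convexity results; it also yields a one-parameter family of saddle matrices $\tilde{Q}$ (indexed by $t\in(0,1)$) distinct from the paper's.

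\emph{One caveat.} Your stated motivation---that ``$R_0(\cdot,Q)$ is only quasiconvex on $\mathcal{S}$, not convex''---is at odds with the paper, which explicitly cites \citet[Theorem~4.3]{Friedland:81} for strict convexity and uses it. Your quasiconvexity argument via sublevel sets is fine, but quasiconvexity does not preclude convexity; so this remark is at best unsubstantiated and at worst incorrect. Fortunately it plays no role in your actual proof, which stands on its own. If you retain your route, simply drop that sentence; alternatively, note that the convexity/first-order approach of Theorem~\ref{thm1} \emph{does} carry over (with a different $\tilde{Q}$) once one accepts Friedland's result.
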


\begin{proof} 
The proof is similar to the proof of Theorem~\ref{thm1}. We first prove the final equality in (\ref{thm2:eq1}). For any $ Q \in \mathcal{Q} $, $ Q {\bf 1} = {\bf 0 }$ so $\left( \mbox{diag}(\gamma) - Q\right)^{-1} \gamma = {\bf 1} $. As
\begin{equation}
\Lambda(\tilde{\pi},Q) {\bf 1} =  \omega(\beta,\gamma)\left( \mbox{diag}(\gamma) - Q \right)^{-1} \mbox{diag}(\gamma) {\bf 1}  = \omega(\beta,\gamma)  {\bf 1}, \label{thm2:eq0}
\end{equation}
it follows that $ R_{0}(\tilde{\pi},Q) = \omega(\beta,\gamma) $ \citep[Theorem 1.6]{Seneta:81}.


To prove the inequality in (\ref{thm2:eq1}), fix $ Q \in \mathcal{Q} $. As $ \mathcal{S} $ is a convex set, \citet[Theorem~4.3]{Friedland:81} shows that $ R_{0}(\cdot,Q) $ is a strictly convex functional on $ \mathcal{S} $. Therefore, $ \hat{\pi}(Q) $ minimises $ R_{0}(\cdot,Q) $ if and only if
\begin{equation}
\sum_{i=1}^{m} \frac{\partial R_{0}(\pi,Q) }{\partial \pi_{i}} \Big|_{\pi = \hat{\pi}(Q)} \left(\pi_{i} - \hat{\pi}(Q)_{i} \right) \geq 0, \label{thm2:eq3}
\end{equation}
for all $ \pi \in \mathcal{S} $. The partial derivatives of $ R_{0}(\cdot,Q) $ are
\begin{equation}
\frac{\partial R_{0}(\pi,Q) }{\partial \pi_{i}}  =  \frac{\beta_{i} ({\bf w}   \left(\mbox{diag}(\gamma) - Q \right)^{-1})_{i}  {\bf v}_{i}}{{\bf w}^{T} {\bf v}}, \label{thm2:eq4}
\end{equation}
where $ {\bf w} $ and $ {\bf v} $ are the left and right eigenvectors of $ \Lambda(\pi,Q) $ corresponding to the dominant eigenvalue \citep[pg 183]{SS:90}.
As noted previously, $ {\bf 1} $ is a right eigenvector of  $\Lambda(\tilde{\pi},Q) $ corresponding to the dominant eigenvalue $ \omega(\beta,\gamma) $. Substituting $ \pi = \tilde{\pi} $ and $ {\bf v} = {\bf 1} $  in equation (\ref{thm2:eq4}) and combining with equation (\ref{thm2:eq3}), we see that the inequality in (\ref{thm2:eq1}) will follow if there exists a  $ \tilde{Q} \in \mathcal{Q} $ such that for all $ \pi \in \mathcal{S}$
\begin{equation}
\sum_{i=1}^{m} \beta_{i}  ({\bf w}   (\mbox{diag}(\gamma) - \tilde{Q} )^{-1})_{i} \left(\pi_{i} - \tilde{\pi}_{i} \right) \geq 0. \label{thm2:eq2}
\end{equation}
As in Theorem~\ref{thm1},  set $ \tilde{Q} $ such that $ \tilde{Q}_{ij} = \beta_{j}^{-1} $ for $ i \neq j $. Then the left eigenvector of $ \Lambda(\tilde{\pi},\tilde{Q}) $ corresponding to the dominant eigenvalue $ \omega(\beta,\gamma) $ satisfies $ w_{i} \propto \gamma_{i}/\beta_{i}$ so $ ({\bf w} (\mbox{diag}(\gamma) - \tilde{Q})^{-1})_{i} \propto \beta_{i}^{-1} $.  
Therefore, inequality (\ref{thm2:eq2}) holds which proves $ R_{0}(\pi,\tilde{Q}) \geq R_{0}(\tilde{\pi},\tilde{Q})  $ for all $ \pi \in \mathcal{S} $.
\end{proof}

Theorem~\ref{thm2} shows that $ \tilde{\pi} $ is the distribution of susceptible individuals which minimises the basic reproduction rate of the epidemic when infected individuals move to maximise the basic reproduction rate of the epidemic.

Although Theorems~\ref{thm1} and~\ref{thm2} give two different strategies for minimising the disease spread, the quantities $ \chi(\beta,\gamma) $ and $ \omega(\beta,\gamma) $ are closely related. First, $ \chi(\beta,\gamma) = 0 $ if and only if $ \omega(\beta,\gamma) = 1 $ in which case $ \pi^{\ast} = \tilde{\pi} $. Note we also have $ \pi^{\ast} = \tilde{\pi} $ when the recovery rates do not depend on the group. Second, $ \chi(\beta,\gamma) < 0 $ if and only if $ \omega(\beta,\gamma) < 1 $. Therefore, Theorems~\ref{thm1} and~\ref{thm2} and Corollary~\ref{cor1} imply that the $ R_{0} $-optimal strategy yields $ R_{0} > 1 $ if  and only if the $ \tau $-optimal strategy yields $ \tau > 0 $.

Taking $ R_{0} $ as the objective function has the advantage that it is always possible to give an explicit optimal strategy. Although $ R_{0} $ is more tractable than $ \tau $, it is only useful if the resulting optimal strategy reduces the extent of the original epidemic in some sense. In the context of ODE models, \citet{DHR:10} notes that epidemics with high $ R_{0} $ do not necessarily have a fast increase of incidence. Therefore, one might question the relevance of reducing $ R_{0} $ if the threshold cannot be achieved and, if the threshold can be achieved, the advantage of reducing $ R_{0} $ further. To see why it is always useful to reduce $ R_{0} $, it is necessary to consider the two cases $ \omega(\beta,\gamma) < 1 $ and $ \omega(\beta,\gamma) > 1 $ separately. The case where $ \omega(\beta,\gamma) = 1 $ is not considered since in that case $ \tilde{\pi} = \pi^{\ast} $.

We first examine how the optimal strategy from Theorem~\ref{thm2} relates to the probability that the branching process goes extinct in
finite time. This probability is determined by the smallest fixed point of the probability generating function for the offspring distribution. For the branching process determined by (\ref{Model:Eq1})--(\ref{Model:Eq3}), this probability generating function is
$$
g_{i}({\bf u};\pi,Q) = \frac{\sum_{j\neq i} Q_{ij} u_{j} + \beta_{i} \pi_{i} u_{i}^{2} + \gamma_{i}}{\sum_{j\neq i} Q_{ij}  + \beta_{i} \pi_{i}  + \gamma_{i}}.
$$
The function $ {\bf g}({\bf u};\pi,Q) = (g_{1}({\bf u};\pi,Q),\ldots,g_{m}({\bf u};\pi,Q)) $ always has a fixed point at $ {\bf 1} $, that is $ {\bf g}({\bf 1};\pi,Q) ={\bf  1} $. If $ \tau(\pi,Q) > 0 $, then $ {\bf g}(\cdot;\pi,Q) $ has a second fixed point, which is unique in $ (0,1)^{m} $ \citep[Section 2.3]{AD:13}. Denote the smallest fixed point of $ {\bf g}(\cdot;\pi,Q) $ in $[0,1]^{m}$ by $ {\bf q}(\pi,Q) $. The probability of extinction in finite time is given by
$$
\lim_{t\rightarrow\infty} \mathbb{P}\left(Y_{j}(t) = 0 \mbox{ for } j =1,\ldots,m \mid Y_{i}(0) = y_{i} \mbox{ for } i=1,\ldots,m \right) = \prod_{i=1}^{m} q_{i}^{y_{i}}(\pi,Q).
$$
The following results shows that taking the distribution of susceptibles to be $ \tilde{\pi} $ maximises the probability of extinction in finite time minimised over the starting location of the initial infected individual. 

\begin{theorem} \label{thm3}
If $ \omega(\beta,\gamma) > 1 $, then for any $ \epsilon > 0 $ there exists a $ Q^{\epsilon} \in \mathcal{Q} $ such that 
\begin{equation}
\min_{i}\left\{q_{i}(\pi,Q^{\epsilon})\right\} - \epsilon \leq \min_{i}\left\{q_{i}(\tilde{\pi},Q^{\epsilon})\right\} = \min_{i}\left\{q_{i}(\tilde{\pi},Q)\right\} = \omega(\beta,\gamma)^{-1} \label{thm3:eq1}
\end{equation}
for all $ \pi \in \mathcal{S} $ and $ Q \in \mathcal{Q} $. 
\end{theorem}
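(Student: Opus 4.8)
The plan is to handle the two equalities in (\ref{thm3:eq1}) first — they hold for \emph{every} $Q\in\mathcal{Q}$, hence in particular for the $Q^{\epsilon}$ constructed afterwards — and then to exhibit $Q^{\epsilon}$. \textbf{The equalities.} Exactly as in (\ref{thm2:eq0}), since $\beta_{i}\tilde{\pi}_{i}=\gamma_{i}\,\omega(\beta,\gamma)$ a direct substitution shows that ${\bf u}=\omega(\beta,\gamma)^{-1}{\bf 1}$ is a fixed point of ${\bf g}(\,\cdot\,;\tilde{\pi},Q)$ for every $Q\in\mathcal{Q}$: the migration terms contribute the factor $\omega(\beta,\gamma)^{-1}$ because $Q{\bf 1}={\bf 0}$, while $\beta_{i}\tilde{\pi}_{i}\,\omega(\beta,\gamma)^{-2}+\gamma_{i}=\omega(\beta,\gamma)^{-1}(\beta_{i}\tilde{\pi}_{i}+\gamma_{i})$. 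Since $\omega(\beta,\gamma)>1$ this fixed point lies in $(0,1)^{m}$ and is distinct from ${\bf 1}$, which forces $\tau(\tilde{\pi},Q)>0$; then $\omega(\beta,\gamma)^{-1}{\bf 1}$ is the unique fixed point of ${\bf g}(\,\cdot\,;\tilde{\pi},Q)$ in $(0,1)^{m}$ \citep[Section 2.3]{AD:13}, so it coincides with the smallest fixed point ${\bf q}(\tilde{\pi},Q)$ (assuming, as throughout, $\gamma_{i}>0$, so $\tilde{\pi}\in\mathcal{S}$ and ${\bf q}(\tilde{\pi},Q)\gg{\bf 0}$). Hence $\min_{i}q_{i}(\tilde{\pi},Q)=\omega(\beta,\gamma)^{-1}$ for every $Q\in\mathcal{Q}$, which is both equalities in (\ref{thm3:eq1}).

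\textbf{The inequality.} Write $B=\sum_{j=1}^{m}\beta_{j}^{-1}$. I would take $Q^{\epsilon}=c_{\epsilon}\tilde{Q}$, where $\tilde{Q}$ is the matrix of Theorem~\ref{thm2} ($\tilde{Q}_{ij}=\beta_{j}^{-1}$ for $i\neq j$, with stationary vector $(\beta_{i}^{-1})_{i}$) and $c_{\epsilon}$ is a large constant chosen at the end. The point is that as $c\to\infty$ the migration becomes infinitely fast, the type of each infective relaxes to $w_{i}\propto\beta_{i}^{-1}$ on the $O(1/c)$ time scale while births and deaths act on the $O(1)$ scale, so the process should behave like a single-type linear birth--death branching process with per-capita birth rate $\sum_{i}w_{i}\beta_{i}\pi_{i}=1/B$ and death rate $\sum_{i}w_{i}\gamma_{i}=\omega(\beta,\gamma)^{-1}/B$ — \emph{independently of $\pi$} — whose extinction probability is $\omega(\beta,\gamma)^{-1}$. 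To turn this into estimates, write ${\bf q}={\bf q}(\pi,c\tilde{Q})$; the identity ${\bf g}({\bf q};\pi,c\tilde{Q})={\bf q}$ rearranges (using $\sum_{j\neq i}(c\tilde{Q})_{ij}=-(c\tilde{Q})_{ii}$) to
\[
(c\tilde{Q}\,{\bf q})_{i}=(1-q_{i})(\beta_{i}\pi_{i}q_{i}-\gamma_{i}),\qquad i=1,\ldots,m .
\]
This gives at once $|q_{i}-\bar{q}|\le K/(cB)$ with $\bar{q}:=B^{-1}\sum_{j}\beta_{j}^{-1}q_{j}$ and $K$ depending only on $\max_{i}\beta_{i},\max_{i}\gamma_{i}$ (uniformly in $\pi$ and $i$); multiplying the identity by $\beta_{i}^{-1}$ and summing, the left side vanishes (as $(\beta_{i}^{-1})_{i}$ is $\tilde{Q}$-stationary), leaving $\sum_{i}(1-q_{i})(\pi_{i}q_{i}-\gamma_{i}\beta_{i}^{-1})=0$; substituting $q_{i}=\bar{q}+O(1/c)$ and using $\sum_{i}\pi_{i}=1$ and $\sum_{i}\gamma_{i}\beta_{i}^{-1}=\omega(\beta,\gamma)^{-1}$ yields $(1-\bar{q})(\bar{q}-\omega(\beta,\gamma)^{-1})=O(1/c)$, uniformly in $\pi$. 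So every $q_{i}(\pi,c\tilde{Q})$ lies within $o(1)$ of $\omega(\beta,\gamma)^{-1}$ or of $1$, uniformly in $\pi$, as $c\to\infty$.

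The remaining step — which I expect to be the main obstacle — is to exclude the value $1$, equivalently to bound the survival probability $1-q_{i}(\pi,c\tilde{Q})$ below by a positive constant uniformly in $\pi$ and for all large $c$. One cannot simply cite a bound in terms of the transition rates, since the migration rates $c\beta_{j}^{-1}$ diverge and, because an adversarial $\pi$ can concentrate infection away from most groups, the naive comparison of $\sum_{i}Y_{i}(t)$ with a birth--death process fails; the way around it is that migration leaves $\sum_{i}Y_{i}(t)$ unchanged, so that the averaging heuristic above can be made rigorous: $\sum_{i}Y_{i}(t)$ converges as $c\to\infty$ to the single-type birth--death process described above, which is supercritical ($R_{0}=\omega(\beta,\gamma)>1$) with survival probability $1-\omega(\beta,\gamma)^{-1}>0$, uniformly in $\pi$ because the limit does not depend on $\pi$. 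The delicate part is the interchange of $c\to\infty$ with the $t\to\infty$ implicit in the extinction probability; the uniform supercriticality of $c\tilde{Q}$ itself can be obtained separately by a short perturbation argument, exhibiting a positive $v={\bf 1}+c^{-1}u(\pi)$, with $u(\pi)$ bounded on $\mathcal{S}$, such that $A(\pi,c\tilde{Q})v\ge\tau_{0}v$ for a fixed $\tau_{0}>0$ and all large $c$. Granting the uniform survival bound, $\bar{q}$ — hence every $q_{i}(\pi,c\tilde{Q})$ — converges to $\omega(\beta,\gamma)^{-1}$ uniformly in $\pi$, so taking $c_{\epsilon}$ large enough makes $\min_{i}q_{i}(\pi,Q^{\epsilon})\le\omega(\beta,\gamma)^{-1}+\epsilon$ for every $\pi\in\mathcal{S}$.
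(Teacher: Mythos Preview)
Your treatment of the equalities matches the paper's (and is in fact a touch more careful about why $\omega^{-1}{\bf 1}$ is the \emph{smallest} fixed point). For the inequality you go in the opposite direction from the paper, and there is a genuine gap at the step you yourself flag.

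You take $Q^{\epsilon}=c_{\epsilon}\tilde Q$ with $c_{\epsilon}$ \emph{large} --- fast migration, so that types average and the process behaves like a single-type birth--death chain with extinction probability $\omega^{-1}$, independent of $\pi$. The manipulations giving $|q_i-\bar q|\le K/(cB)$ and $(1-\bar q)(\bar q-\omega^{-1})=O(1/c)$ uniformly in $\pi$ are correct, but this leaves two possible locations for $\bar q$: near $\omega^{-1}$ and near $1$. Excluding the latter requires a lower bound on $1-\min_i q_i(\pi,c\tilde Q)$ that is \emph{uniform in $\pi\in\mathcal S$ and in large $c$}, and you do not supply one. The perturbative eigenvector sketch does give a uniform positive lower bound on $\tau(\pi,c\tilde Q)$, but that is not enough by itself: the one-step offspring generating function changes with $c$, so supercriticality of the mean does not automatically control extinction probabilities uniformly. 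The averaging route you outline for $\sum_i Y_i(t)$ runs straight into the $c\to\infty$ versus $t\to\infty$ interchange that you call delicate and do not carry out. So the proof is incomplete precisely at the point where the work lies.

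The paper takes the opposite limit: $Q^{\epsilon}$ with \emph{small} off-diagonals (slow migration). Writing $\delta_i=\sum_{j\neq i}Q^{\epsilon}_{ij}$ and choosing $\sum_i\delta_i/\beta_i<\epsilon$, monotonicity of ${\bf g}$ gives the explicit upper bound $q_i(\pi,Q^{\epsilon})\le(\gamma_i+\delta_i)/(\beta_i\pi_i)\wedge 1$, essentially the single-patch extinction probability. A short pigeonhole then shows that for every $\pi\in\mathcal S$ some $i$ satisfies $(\gamma_i+\delta_i)/(\beta_i\pi_i)\le\omega^{-1}+\epsilon$: if not, then $\pi_i<\tilde\pi_i+\delta_i\omega/\beta_i-\epsilon\pi_i\omega$ for every $i$, and summing over $i$ contradicts $\sum_i\delta_i/\beta_i<\epsilon$. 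This is entirely elementary and sidesteps any limit interchange; the decoupled bound is exactly what your fast-migration approach lacks.
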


\begin{proof}
When $ \pi = \tilde{\pi} $, the probability generating function of the offspring distribution is 
$$
g_{i}({\bf u};\tilde{\pi},Q) = \frac{\sum_{j\neq i} Q_{ij} u_{j} + \gamma_{i} \omega(\beta,\gamma) u_{i}^{2} + \gamma_{i}}{\sum_{j\neq i} Q_{ij}  + \gamma_{i} \omega(\beta,\gamma)  + \gamma_{i}}.
$$
It can be verified by substitution that $ q_{i}(\tilde{\pi},Q) =
\omega(\beta,\gamma)^{-1},\ i=1,\ldots,m $, for all $ Q \in \mathcal{Q}$. It remains to prove the inequality in (\ref{thm3:eq1}), which we achieve by determining an upper bound on $ {\bf q}(\pi,Q) $ for certain $ Q $.

As $ g_{i}(\cdot;\pi,Q)  $ is a monotone function and $ {\bf q}(\pi,Q) $ is a fixed point of $ {\bf g}(\cdot;\pi,Q) $, it follows that if, for some $ {\bf p} \in (0,1)^{m} $, $ g_{i}({\bf p};\pi,Q) \leq p_{i},\ i=1,\ldots,m $, then $ q_{i}(\pi,Q) \leq p_{i},\ i=1,\ldots,m $. Let $ \delta_{i} = \sum_{j\neq i} Q^{\epsilon}_{ij} $ and choose $ Q^{\epsilon} $ such that $ \sum_{i=1}^{m} \delta_{i} /\beta_{i} < \epsilon $. For any $ {\bf p} \in (0,1)^{m} $,
\begin{equation}
g_{i}({\bf p};\pi,Q^{\epsilon}) \leq \frac{\delta_{i} + \beta_{i}\pi_{i} p_{i}^{2} + \gamma_{i}}{\delta_{i} + \beta_{i}\pi_{i} + \gamma_{i}} = p_{i} + \frac{\delta_{i} +  \beta_{i}\pi_{i} p_{i}^{2} + \gamma_{i} - (\beta_{i}\pi_{i} + \gamma_{i} + \delta_{i})p_{i}}{\delta_{i} + \beta_{i}\pi_{i}  + \gamma_{i}}. \label{thm3:eq2}
\end{equation}
It can be verified by substitution into (\ref{thm3:eq2}) that 
$$ 
p_{i} = \left( \frac{\gamma_{i} + \delta_{i}}{\beta_{i}\pi_{i}} \wedge 1\right)
$$
is an upper bound on $ {\bf q}(\pi,Q^{\epsilon}) $. Therefore,
$$
\min_{i}  q_{i}(\pi,Q^{\epsilon}) - \epsilon \leq \min_{i} \left\{\frac{\gamma_{i} + \delta_{i}}{\beta_{i} \pi_{i}}\right\} - \epsilon.
$$
Suppose that, for some $ \pi $ and all $ i =1,\ldots,m$,
$$
\frac{\gamma_{i} + \delta_{i}}{\beta_{i}\pi_{i}} - \epsilon > \omega(\beta,\gamma)^{-1},
$$
then
\begin{equation}
\pi_{i} < \tilde{\pi_{i}} + \frac{\delta_{i} \omega(\beta,\gamma) }{\beta_{i}} - \epsilon \pi_{i} \omega(\beta,\gamma). \label{thm3:eq3}
\end{equation}
By summing over $ i $ in inequality (\ref{thm3:eq3}), we arrive at the
contradiction $ \sum_{i=1}^{m} \delta_{i} /\beta_{i} > \epsilon $.
Therefore, for all $ \pi \in \mathcal{S} $, there is at least one $ i $ such that 
$$
\frac{\gamma_{i} + \delta_{i}}{\beta_{i}\pi_{i}} - \epsilon \leq \omega(\beta,\gamma)^{-1},
$$
which proves the inequality in (\ref{thm3:eq1}).
\end{proof}

The previous theorem provides support for minimising $ R_{0} $ when $ \omega(\beta,\gamma) > 1 $; it remains to justify minimising $ R_{0} $ when $ \omega(\beta,\gamma) < 1 $. Let $ T_{ij}(\pi,Q) $ denote the number of individuals infected in node $ j $ starting from a single infected individual at node $ i $. We now consider the effect of migration on the expected total size of the epidemic,
$$
\max_{i}  \mathbb{E} \left(\sum_{j=1}^{m} T_{ij}(\pi,Q) \right).
$$
The next result shows that taking the distribution of susceptibles to be $ \tilde{\pi} $ minimises the total size of the epidemic maximised over the starting location of the initial infected individual.

\begin{theorem} \label{thm4}
If $ \omega(\beta,\gamma) < 1 $, then for any $ \epsilon > 0 $ there exists a $ Q^{\epsilon} \in \mathcal{Q} $ such that 
\begin{eqnarray}
\max_{i}  \mathbb{E} \left(\sum_{j=1}^{m} T_{ij}(\pi,Q^{\epsilon}) \right) + \epsilon & \geq & \max_{i}  \mathbb{E} \left(\sum_{j=1}^{m}  T_{ij}(\tilde{\pi},Q^{\epsilon}) \right) \label{Thm4:Eq1}\\
 & = &  \max_{i}  \mathbb{E} \left(\sum_{j=1}^{m}  T_{ij}(\tilde{\pi},Q) \right) 
 =  (1 - \omega(\beta,\gamma))^{-1} \nonumber
\end{eqnarray}
for all $ \pi \in \mathcal{S} $ and $ Q \in \mathcal{Q} $.
\end{theorem}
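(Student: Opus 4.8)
The plan is to follow the template of the proof of Theorem~\ref{thm3}, the key observation being that the expected total size can be written through $\Lambda(\pi,Q)$. Since the total size of the branching process determined by (\ref{Model:Eq1})--(\ref{Model:Eq3}) coincides with the total progeny of the embedded Galton--Watson process whose mean offspring matrix is $\Lambda(\pi,Q)$, the expected number of individuals ever infected (over all groups) starting from one infective in group $i$ is $\mathbb{E}(\sum_{j=1}^{m} T_{ij}(\pi,Q)) = \sum_{k\geq 0}(\Lambda(\pi,Q)^{k}{\bf 1})_{i}$, which equals $((I-\Lambda(\pi,Q))^{-1}{\bf 1})_{i}$ when $R_{0}(\pi,Q)<1$ and is $+\infty$ otherwise. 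I would record this at the outset and then treat the equalities and the inequality separately, exactly as in the earlier theorems.

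For the two equalities in (\ref{Thm4:Eq1}): by (\ref{thm2:eq0}), $\Lambda(\tilde{\pi},Q){\bf 1}=\omega(\beta,\gamma){\bf 1}$ for every $Q\in\mathcal{Q}$, so $\Lambda(\tilde{\pi},Q)^{k}{\bf 1}=\omega(\beta,\gamma)^{k}{\bf 1}$ and, because $\omega(\beta,\gamma)<1$, summing the geometric series gives $\sum_{k\geq 0}\Lambda(\tilde{\pi},Q)^{k}{\bf 1}=(1-\omega(\beta,\gamma))^{-1}{\bf 1}$. Hence $\mathbb{E}(\sum_{j=1}^{m} T_{ij}(\tilde{\pi},Q))=(1-\omega(\beta,\gamma))^{-1}$ for every $i$ and every $Q$, so the maximum over starting locations is $(1-\omega(\beta,\gamma))^{-1}$, which yields both equalities.

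For the inequality, fix $\epsilon>0$; we may assume $c:=(1-\omega(\beta,\gamma))^{-1}-\epsilon>1$, since otherwise the claim is immediate from $\mathbb{E}(\sum_{j=1}^{m} T_{ij})\geq 1$. Mimicking Theorem~\ref{thm3}, set $\delta_{i}=\sum_{j\neq i}Q^{\epsilon}_{ij}$ and pick $Q^{\epsilon}\in\mathcal{Q}$ with $\sum_{i}\delta_{i}/\beta_{i}$ small, the precise smallness to be fixed below. An individual infected in group $i$ occupies group $i$ for its first visit for an $\mathrm{Exp}(\gamma_{i}+\delta_{i})$ time, so $L_{ii}(Q^{\epsilon})\geq (\gamma_{i}+\delta_{i})^{-1}$ and hence $\Lambda_{ii}(\pi,Q^{\epsilon})=L_{ii}(Q^{\epsilon})\beta_{i}\pi_{i}\geq \beta_{i}\pi_{i}/(\gamma_{i}+\delta_{i})$; consequently $\mathbb{E}(\sum_{j=1}^{m} T_{ij}(\pi,Q^{\epsilon}))\geq \sum_{k\geq 0}(\Lambda_{ii}(\pi,Q^{\epsilon}))^{k}\geq (1-\beta_{i}\pi_{i}/(\gamma_{i}+\delta_{i}))^{-1}$ when $\beta_{i}\pi_{i}<\gamma_{i}+\delta_{i}$, and the left side is $+\infty$ otherwise. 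Suppose, for contradiction, that for some $\pi\in\mathcal{S}$ all $m$ of these lower bounds are strictly below $c$: then $\beta_{i}\pi_{i}<\gamma_{i}+\delta_{i}$ and $\beta_{i}\pi_{i}/(\gamma_{i}+\delta_{i})<1-c^{-1}$, i.e. $\pi_{i}<(1-c^{-1})(\gamma_{i}+\delta_{i})/\beta_{i}$, for every $i$. Summing gives $1=\sum_{i}\pi_{i}<(1-c^{-1})(\omega(\beta,\gamma)^{-1}+\sum_{i}\delta_{i}/\beta_{i})$. Since $c<(1-\omega(\beta,\gamma))^{-1}$ we have $1-c^{-1}<\omega(\beta,\gamma)$, so $(1-c^{-1})\omega(\beta,\gamma)^{-1}<1$; choosing $Q^{\epsilon}$ so that $\sum_{i}\delta_{i}/\beta_{i}\leq (1-c^{-1})^{-1}-\omega(\beta,\gamma)^{-1}$ (a strictly positive quantity) makes the right side at most $1$, a contradiction. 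Therefore $\max_{i}\mathbb{E}(\sum_{j=1}^{m} T_{ij}(\pi,Q^{\epsilon}))\geq c$ for all $\pi\in\mathcal{S}$, which is the inequality in (\ref{Thm4:Eq1}).

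The step I expect to be the main obstacle is obtaining a lower bound on the expected total size that is uniform over $\pi\in\mathcal{S}$ — the bound through the single diagonal entry $\Lambda_{ii}$ does this cleanly — together with the bookkeeping that lets the slack $\omega(\beta,\gamma)<1$ pay for the error term $\epsilon$ (this is why $\sum_{i}\delta_{i}/\beta_{i}$ must be chosen $\epsilon$-dependent), while keeping straight the harmless cases in which some of these quantities are infinite.
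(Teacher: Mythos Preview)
Your proof is correct and follows the same overall architecture as the paper's: the equalities come from $\Lambda(\tilde{\pi},Q){\bf 1}=\omega(\beta,\gamma){\bf 1}$ and a geometric series, and the inequality is obtained by lower-bounding the diagonal entries $\Lambda_{ii}(\pi,Q^{\epsilon})$, deducing $(1-\Lambda_{ii})^{-1}$ as a lower bound on the expected total size, and then reaching a contradiction by summing over~$i$. The difference is in how the diagonal bound is obtained. The paper fixes the specific matrix $Q^{\epsilon}=\delta({\bf 1}{\bf 1}^{T}-mI)$, applies the Woodbury identity to $(\mbox{diag}(\gamma)-Q^{\epsilon})^{-1}$, and reads off the entrywise inequality $\mbox{diag}(\beta)\mbox{diag}(\pi)\Gamma_{\delta}^{-1}\leq\Lambda(\pi,Q^{\epsilon})$; you instead use the probabilistic observation that the first sojourn in group~$i$ already contributes $(\gamma_{i}+\delta_{i})^{-1}$ to $L_{ii}$, which yields $\Lambda_{ii}\geq\beta_{i}\pi_{i}/(\gamma_{i}+\delta_{i})$ directly for \emph{any} $Q^{\epsilon}$ with small $\sum_{i}\delta_{i}/\beta_{i}$. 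Your route is more elementary (no matrix identity needed) and slightly more general in the choice of $Q^{\epsilon}$; the paper's route gives an explicit $Q^{\epsilon}$ and a concrete threshold on~$\delta$. The final contradiction arithmetic is also organised differently --- you work with $c=(1-\omega)^{-1}-\epsilon$ and the bound $(1-c^{-1})^{-1}-\omega^{-1}$ on $\sum_{i}\delta_{i}/\beta_{i}$, whereas the paper rearranges to compare with $\tilde{\pi}_{i}$ and uses the threshold~(\ref{Thm4:Eq4}) --- but both reduce to the same pigeonhole on $\sum_{i}\pi_{i}=1$.
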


\begin{proof}

As the total size of the branching process approximating the epidemic is the same as that of an embedded Galton-Watson process whose offspring distribution has mean matrix $ \Lambda(\pi,Q) $ \citep[Section 4.1]{Clancy:96}, 
$$
\mathbb{E} \left(\sum_{j=1}^{m} T_{ij}(\pi,Q) \right)   = \left(\sum_{r=0}^{\infty} \Lambda(\pi,Q)^{r}\right)_{i},
$$
which is finite if and only if the spectral radius of $ \Lambda(\pi,Q) $ is strictly less than one. From equation~(\ref{thm2:eq0}), 
$$
\mathbb{E} \left(\sum_{j=1}^{m}  T_{ij}(\tilde{\pi},Q) \right)   = (1 - \omega(\beta,\gamma))^{-1},
$$
for all $i =1,\ldots, m$ and all $ Q \in \mathcal{Q} $. This proves the equality in (\ref{Thm4:Eq1}). To complete the proof, it remains to show that for any $ \epsilon > 0 $ there exists a $ Q^{\epsilon} \in \mathcal{Q} $, such that 
\begin{equation}
\max_{i} \mathbb{E} \left(\sum_{j=1}^{m} T_{ij}(\pi,Q^{\epsilon}) \right) + \epsilon \geq  (1 - \omega(\beta,\gamma))^{-1}, \label{Thm4:Eq2}
\end{equation}
for all $ \pi \in \mathcal{S}$. Take $ Q^{\epsilon} = \delta ({\bf 1} {\bf 1}^{T} - m I) $ where $ \delta $ satisfies
\begin{equation}
 0 < \delta m \sum_{i=1}^{m} \gamma_{i}^{-1} < \epsilon(1-\omega(\beta,\gamma)) (\omega(\beta,\gamma)^{-1} - 1). \label{Thm4:Eq4}
\end{equation}
Applying the Woodbury matrix identity to $ \Lambda(\pi,Q^{\epsilon}) $, we obtain
$$
\Lambda(\pi,Q^{\epsilon}) = \left( I + \frac{\delta}{(1 - \delta {\bf 1}^{T} \Gamma_{\delta}^{-1} {\bf 1})} {\bf 1}{\bf 1}^{T} \Gamma_{\delta}^{-1}\right)  \Gamma_{\delta}^{-1} \mbox{diag}(\beta) \mbox{diag}(\pi),
$$
where $ \Gamma_{\delta} = \mbox{diag}(\gamma + \delta m {\bf 1}) $. Hence, in the partial order of positive matrices,
$$
\mbox{diag}(\beta) \mbox{diag}(\pi)  \Gamma_{\delta}^{-1}  \leq \Lambda(\pi,Q^{\epsilon}).
$$
Therefore, the expected total size is finite for all $ \epsilon > 0 $ only if $ \beta_{i} \pi_{i} /\gamma_{i} < 1,\ i=1,\ldots,m$, in which case
\begin{equation}
\left(1 - \frac{\beta_{i}\pi_{i}}{\gamma_{i} + \delta m}\right)^{-1} \leq  \mathbb{E} \left(\sum_{j=1}^{m}  T_{ij}(\pi,Q) \right), \label{Thm4:Eq3}
\end{equation}
for all $ i =1,\ldots,m$. Suppose that inequality (\ref{Thm4:Eq2}) did not hold for some $ \pi \in \mathcal{S} $. Then, for all $ i $,
$$
\mathbb{E} \left(\sum_{j=1}^{m} T_{ij}(\pi,Q^{\epsilon}) \right) + \epsilon <  (1 - \omega(\beta,\gamma))^{-1}.
$$
Inequality (\ref{Thm4:Eq3}) would then implies
$$
\left(1 - \frac{\beta_{i}\pi_{i}}{\gamma_{i} + \delta m}\right)^{-1} + \epsilon <  (1 - \omega(\beta,\gamma))^{-1}.
$$
This inequality can be rearranged to
$$
\tilde{\pi}_{i} - \epsilon (1-\omega(\beta,\gamma)) \frac{\gamma_{i}}{\beta_{i}} \left( 1 - \frac{\beta_{i}\pi_{i}}{\gamma_{i} + \delta m} \right) > \left(1 - \frac{\delta m}{\gamma_{i} + \delta m}\right) \pi_{i}.
$$
Summing this inequality over $ i $, we find
$$
\epsilon (1-\omega(\beta,\gamma)) \sum_{i=1}^{m} \frac{\gamma_{i}}{\beta_{i}} \left( 1 - \frac{\beta_{i}\pi_{i}}{\gamma_{i} + \delta m} \right) < \delta m \sum_{i=1}^{m} \frac{\pi_{i}}{\gamma_{i} + \delta m}.
$$
As $\delta $ is chosen to satisfy the inequality (\ref{Thm4:Eq4}), we obtain a contradiction. Hence, inequality (\ref{Thm4:Eq2}) holds for all $ \pi \in \mathcal{S} $. 
\end{proof}

\section{Numerical  comparisons}

In this section we investigate numerically two issues. The first issue concerns the optimal distribution of susceptibles with respect to minimising the expected growth rate. Theorem~\ref{thm1} gives the optimal distribution only if $ \gamma_{i} > -\chi(\beta,\gamma) $ for all $ i $. We have also seen that  if $ \gamma_{i} \geq -\chi(\beta,\gamma) $ for all $ i $ and $ \gamma_{j} = -\chi(\beta,\gamma) $ for some $ j $, then there is a sequence of distributions $ \pi^{\epsilon} $ which achieve within $ \epsilon $ the optimal value of $ \tau $ and for which $ \lim_{\epsilon\rightarrow 0} \pi^{\epsilon}_{j} = 0 $. We might expect that if the recovery rate for this group were to decrease, then the optimal distribution would place no susceptibles in group $ j $. 
This was investigated in a two group epidemic with $ \beta_{1} = 1 $, $ \beta_{2} = 2 $, and $ \gamma_{1} $ and $ \gamma_{2} $ in $ (0,4) $. For these epidemics both $\sup_{Q\in\mathcal{Q}} \inf_{\pi\in\mathcal{S}} \tau(\pi,Q) $ and $ \inf_{\pi\in\mathcal{S}} \sup_{Q\in\mathcal{Q}} \tau(\pi,Q) $ were computed by nested optimisation using the \emph{optim} and \emph{optimize} functions in \textsc{R} \citep{R:2011}. The two quantities differed by less than $10^{-4}$ in all instances computed. The optimal value of $ \tau(\pi,Q) $ is plotted in Figure \ref{Fig1}. Note that in most of the region plotted growth rate is negative. This is to be expected as when the condition of Theorem~\ref{thm1} does not hold, the optimal growth rate must be negative from Corollary~\ref{cor1}. The numerical results confirms our intuition that $ \pi^{\ast}_{i} = 0 $ if $ \gamma_{i} \leq -\chi(\beta,\gamma) $ for the two group model. From the plot it is seen that if $ \gamma_{1} < - \chi(\beta,\gamma)  \leq \gamma_{2} $, then increasing $ \gamma_{2} $ has no effect on the optimal value of $ \tau(\pi,Q) $. This is explained as when $ \gamma_{1} < - \chi(\beta,\gamma)  \leq \gamma_{2} $, the optimal distribution of susceptibles has $ \pi^{\ast}_{2} = 1$. On the other hand, the inequality $ \gamma_{1} < - \chi(\beta,\gamma) $ implies $ \gamma_{1} < \gamma_{2} - \beta_{2} $, so the infectives slow the decrease of the epidemic by moving to group one. Therefore, increasing the recovery rate in group two has no effect on the growth rate of the epidemic and  $ \inf_{\pi\in\mathcal{S}} \sup_{Q\in\mathcal{Q}}\tau(\pi,Q) = - \gamma_{1} $.

\begin{figure}
\begin{center}
\includegraphics[height=10cm]{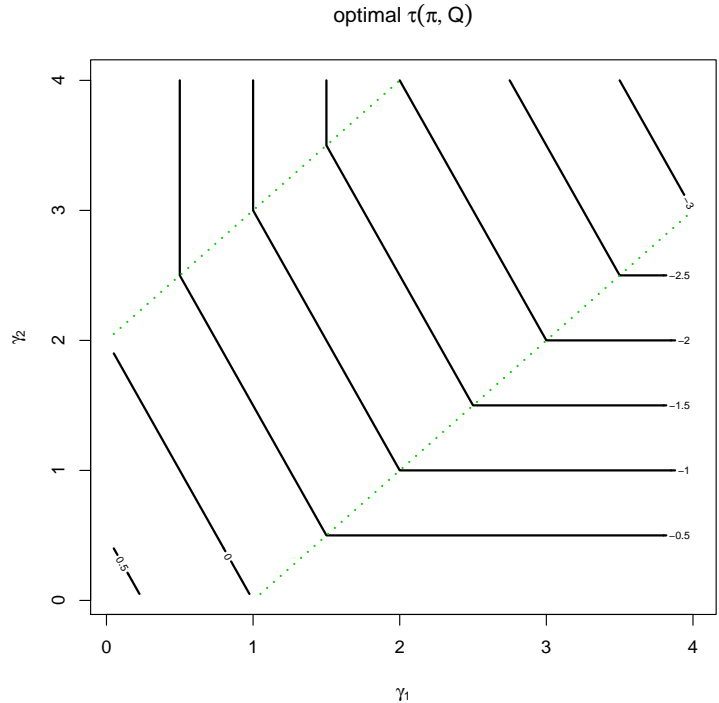}
\end{center}
\caption{A contour plot of the optimal value of $ \tau(\pi,Q) $ for the two group model with $ \beta_{1} = 1 $ and $ \beta_{2} = 2 $. The region between the dotted lines corresponds to the region where $ \gamma_{i} \geq - \chi(\beta,\gamma) $ for $ i=1,2$. Note that where the contour lines are horizontal, $ \tau(\pi,Q) = - \gamma_{2} $ and where the contour lines are vertical $ \tau(\pi,Q) = - \gamma_{1} $.} \label{Fig1}
\end{figure}

By construction, $ \pi^{\ast} $ and $ \tilde{\pi} $ are the optimal distribution of susceptibles for minimising  $ \tau $ and $ R_{0}$ respectively. We now consider their performance on the alternate criteria, that is we calculate $ \sup_{Q \in \mathcal{Q}} R_{0}(\pi^{\ast},Q) $ and $ \sup_{Q\in\mathcal{Q}} \tau(\tilde{\pi},Q) $. Figures \ref{Fig2} and \ref{Fig3} show how much these quantities are
increased by taking alternate optimal distributions of susceptibles. Qualitatively, both figures are very similar. Both quantities plotted achieve their minimum for the same set of $ \gamma $, indicated by the dashed line, as $ \pi^{\ast} = \tilde{\pi} $ for these values of $ \gamma $. Also, an abrupt change in the contours occur along the dashed line in both figures. This is due to $ \pi^{\ast} $ placing zero probability in one of the groups for those values of $ \gamma $ outside the dashed lines. 

For most values of $ \gamma $ the optimal choice for one criterion appears to result in reasonable performance in the other. In particular,  in the region where $ \gamma_{1} \approx \gamma_{2} \approx \beta_{1}\beta_{2}/(\beta_{1} + \beta_{2}) $, $ \tau $  and $ R_{0} $ take approximately the same value under $ \pi^{\ast} $ and $ \tilde{\pi} $. However, for small values of $ \gamma $, the performance of the alternate distributions rapidly deteriorates for both $ \tau $ and $ R_{0} $. This is expected as when $ \gamma $ is small, $ \chi(\beta,\gamma) $ tends to be large which causes the difference between $ \pi^{\ast} $ and $ \tilde{\pi} $ to also be large.

\begin{figure}
\begin{center}
\includegraphics[height=10cm]{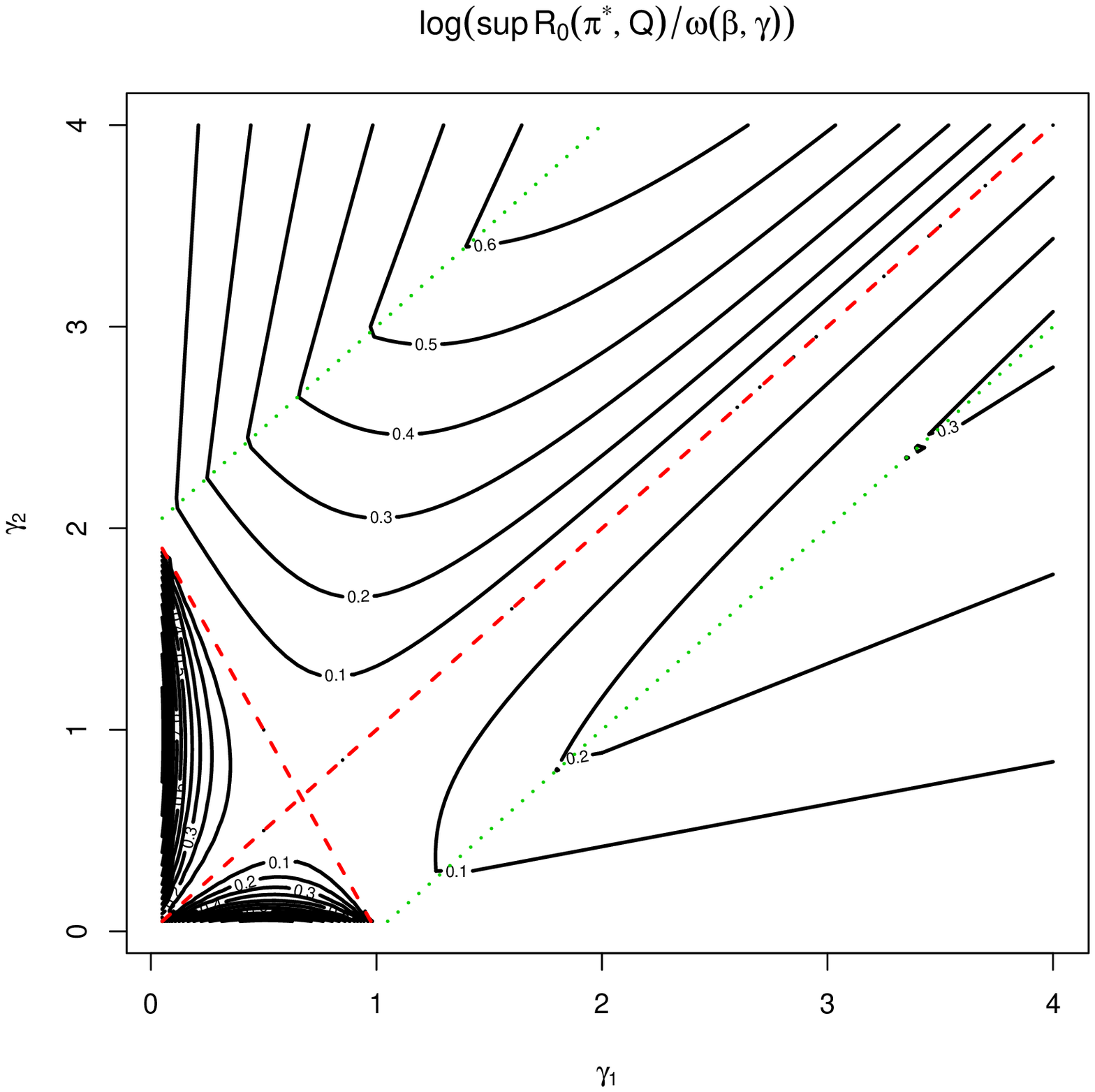}
\end{center}
\caption{A contour plot of the optimal value of $ \log(\sup_{Q\in\mathcal{Q}} R_{0}(\pi^{\ast},Q) / \omega(\beta,\gamma)) $ for the two group model with $ \beta_{1} = 1 $ and $ \beta_{2} = 2 $. The dashed lines correspond to where $ \omega(\beta,\gamma) = 1 $ so $ \pi^{\ast} = \tilde{\pi} $. The region between the dotted lines corresponds to the region where $ \gamma_{i} \geq - \chi(\beta,\gamma) $ for $ i=1,2$. } \label{Fig2}
\end{figure}

\begin{figure}
\begin{center}
\includegraphics[height=10cm]{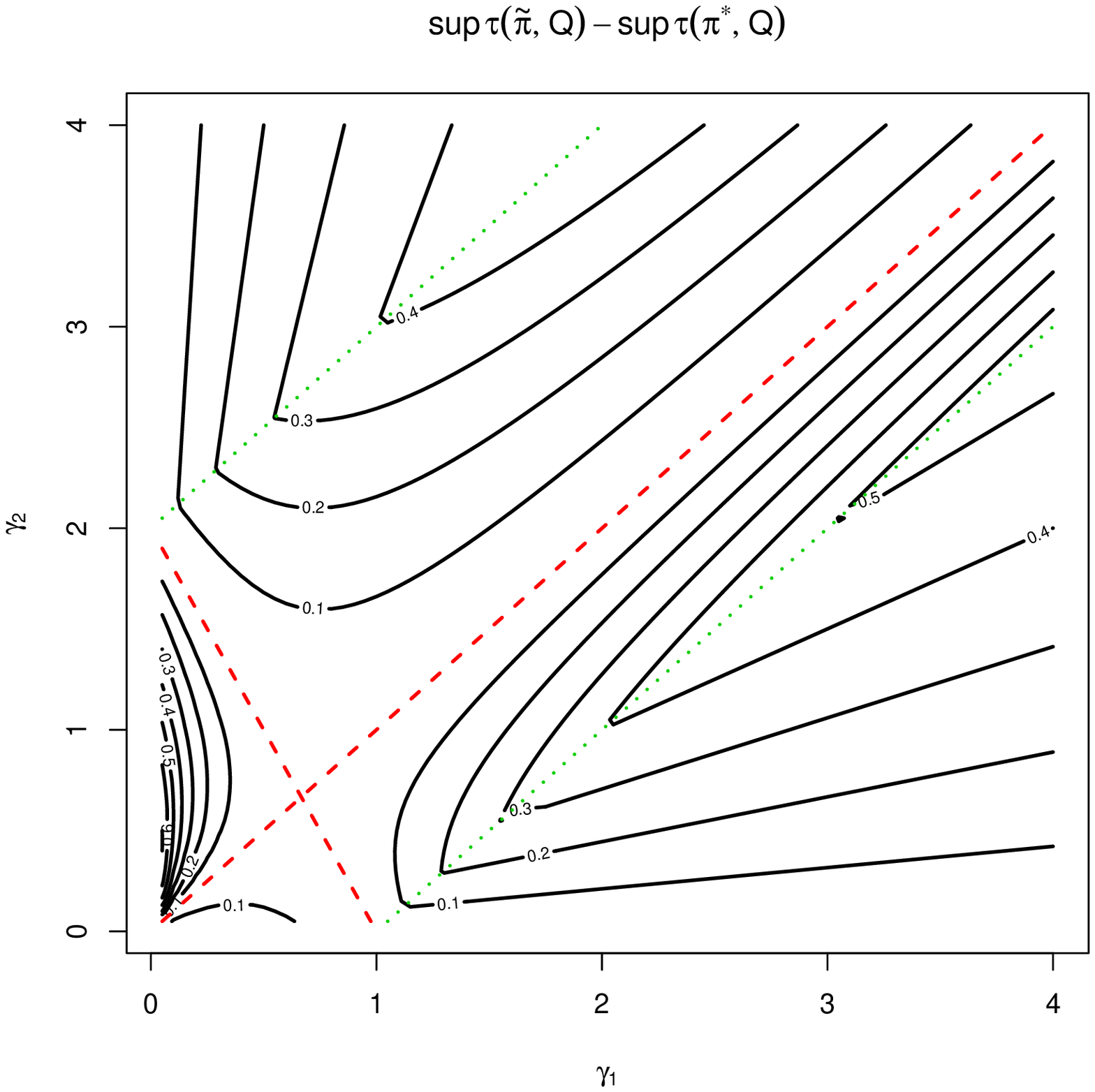}
\end{center}
\caption{A contour plot of the optimal value of $ \sup_{Q\in\mathcal{Q}} \tau(\tilde{\pi},Q)  - \sup_{Q\in\mathcal{Q}} \tau(\pi^{\ast},Q) $ for the two group model with $ \beta_{1} = 1 $ and $ \beta_{2} = 2 $. The dashed lines correspond to where $ \chi(\beta,\gamma) = 0 $ so $ \pi^{\ast} = \tilde{\pi} $. The region between the dotted lines corresponds to the region where $ \gamma_{i} \geq - \chi(\beta,\gamma) $ for $ i=1,2$. } \label{Fig3}
\end{figure}

\section{Discussion}

The conclusions of Theorems~\ref{thm1} and~\ref{thm2} are in part not surprising; in order to minimise the spread of the disease most susceptible individuals should belong to groups with relatively low infection rates and high recovery rates. However, for the form of contact rate assumed here, this needs to be balanced with the fact that contact rates are higher in groups with larger populations. Although Theorems~\ref{thm3} and~\ref{thm4} showed that this balance is achieved in the same way for $ R_{0}$-optimal, extinction probability optimal, and expected total size optimal distributions of susceptibles, it was achieved differently for $ \tau$-optimal distribution of susceptibles. It is conceivable that this balance might be achieved differently for other measures of disease spread.

In our analysis, we have focussed on the branching process approximation to the epidemic. Another widely used approximation is provided by the solution to an ordinary differential equation (ODE). Assume that infected individuals recover without immunity. For the epidemic described at the beginning of Section \ref{Sec:Main}, the ODE approximation is given by the solution to
\begin{eqnarray}
\frac{dx_{i}}{dt} & = & \sum_{j\neq i} R_{ji} x_{j}(t) + R_{ii} x_{i}(t) + \gamma_{i} y_{i}(t) - \beta_{i} x_{i}(t) y_{i}(t) \label{Dis:Eq0}\\ 
\frac{dy_{i}}{dt} & = & \sum_{j\neq i} Q_{ji} y_{j}(t) + Q_{ii} y_{i}(t) - \gamma_{i} y_{i}(t) + \beta_{i} x_{i}(t) y_{i}(t). \label{Dis:Eq1}
\end{eqnarray}
It is known that if $  N^{-1} X_{i}(0) \stackrel{p}{\rightarrow} x_{i}(0) $ and $ N^{-1} Y_{i}(0) \stackrel{p}{\rightarrow} y_{i}(0) $  for $i=1,\ldots,m$ as $ N \rightarrow \infty$, then for any finite $ T>0 $ and any $ \epsilon > 0$
$$
\lim_{N\rightarrow\infty} \Pr \left( \sup_{t \in [0,T]} \left( \sum_{i=1}^{m} |N^{-1} X_{i}(t) - x_{i}(t)| + \sum_{i=1}^{m} | N^{-1}Y_{i}(t) - y_{i}(t)|\right) > \epsilon\right) = 0,
$$
\citep[see][]{Kurtz:70,DN:08}.

Theorems~\ref{thm1} and~\ref{thm2} can still be used to determine the optimal distribution of susceptibles for the ODE model (\ref{Dis:Eq0}) - (\ref{Dis:Eq1}). First, consider the application of Theorem~\ref{thm1}. The spectrum of the Jacobian of the ODE model at the disease free equilibrium is given by the union of the spectrum of $ A(\pi,Q) $, where $ \pi $ is the unique solution to $ \pi R = 0 $ subject to $ \pi {\bf 1} = 1 $, and the spectrum of $ R $ with the zero eigenvalue removed. Therefore, if $ \gamma_{i} \geq -\chi(\beta,\gamma)$ for $i=1,\ldots,m$,  then Theorem~\ref{thm1} determines the $ \tau $ optimal choice of $ \pi $. However, for this to be attained, $ R $ must be chosen so that the non-zero eigenvalues of $ R $ have real part less than $ \chi(\beta,\gamma) $. Theorem~\ref{thm2} can similarly be applied to the ODE model. The next generation matrix \citep[Section 3]{vdDM:02} for the ODE model is given by $ \Lambda(\pi,Q)^{T} $. As the basic reproduction number for the ODE model is given by the spectral radius of the next generation matrix, Theorem~\ref{thm2} determines the $ R_{0}$ optimal distribution of susceptibles in the metapopulation. We are unaware of an interpretation of Theorems~\ref{thm3} and~\ref{thm4} for the ODE model.

We have previously noted that the desire for susceptible individuals to belong to a group with a low infection rate and high recovery rate needs to be balanced with the fact that contact rates are higher in groups with larger populations. This was due to the assumption of density dependent contact rates. An alternative is to assume frequency dependent contact rates, that is to assume the per capita contact rate in a group does not depend on the size of the group. \citet{ABLN:07} studied a frequency-dependent SIS metapopulation model, which in our notation is given by
\begin{eqnarray*}
\frac{dx_{i}}{dt} & = & \sum_{j\neq i} R_{ji} x_{j}(t) + R_{ii} x_{i}(t) + \gamma_{i} y_{i}(t) - \frac{\beta_{i} x_{i}(t) y_{i}(t)}{x_{i}(t) + y_{i}(t)} \label{Dis:Eq2}\\ 
\frac{dy_{i}}{dt} & = & \sum_{j\neq i} Q_{ji} y_{j}(t) + Q_{ii} y_{i}(t) - \gamma_{i} y_{i}(t) + \frac{\beta_{i} x_{i}(t) y_{i}(t)}{x_{i}(t) + y_{i}(t)}. \label{Dis:Eq3}
\end{eqnarray*}
For this model, the next generation matrix is given by $ \mbox{diag}(\beta) (\mbox{diag}(\gamma) - Q^{T})^{-1} $ \citep[Lemma 2.2]{ABLN:07} so $ R_{0}$ does not depend on the migration rates of susceptible individuals. Therefore, we are unable to control the disease spread through the altering the migration rates of susceptible individuals. Although frequency dependent and density dependent contact rates are the most commonly assumed form for contact rates, it is possible to consider contact rates that are some general function of the size of the group. For these more general contact rates, we expect results similar to Theorems~\ref{thm1} and~\ref{thm2} to hold.


\begin{thebibliography}{00}
\bibitem[Adamo(2013)]{Adamo:13} Adamo SA (2013) Parasites: evolution's neurobiologists, Journal of Experimental Biology, 216, 3-10. doi:10.1242/jeb.073601
\bibitem[Allen et al(2007)]{ABLN:07} Allen LJS, Bolker BM, Lou Y and Nevai AL (2007) Asymptotic Profiles of the Steady States for an SIS Epidemic Patch Model, SIAM Journal on Applied Mathematics, 67, 1283-1309. doi:10.1137/060672522
\bibitem[Allen and Landohny(2013)]{AL:13} Allen LJS and Lahodny Jr GE (2013) Extinction thresholds in deterministic and stochastic epidemic models, Journal of Biological Dynamics, 6, 590-611. doi:10.1080/17513758.2012.665502
\bibitem[Allen and van den Driessch(2013)]{AD:13} Allen LJS and van den Driessche P (2013)  Relations between deterministic and stochastic thresholds for disease extinction in continuous- and discrete-time infectious disease models, Mathematical Biosciences, 243, 99-108. doi:10.1016/j.mbs.2013.02.006
\bibitem[Athreya(1968)]{Athreya:68} Athreya KB (1968) Some Results on Multitype Continuous Time Markov Branching Processes, Annals of Mathematical Statistics, 39, 347-357. doi:10.1214/aoms/1177698395
\bibitem[Begon et al(2002)]{BBBFHT:02} Begon M, Bennett M, Bowers RG, French NP, Hazel SM and Turner J (2002) A clarification of transmission terms in host-microparasite models: numbers, densities and areas, Epidemiology and Infection, 129 147-153. doi:10.1017/S0950268802007148 
\bibitem[Clancy(1996)]{Clancy:96} Clancy D (1996) Strong approximations for mobile populations epidemic models, Annals of Applied Probability, 6, 883-895. 
\bibitem[Darling and Norris(2008)]{DN:08} Darling RWR and Norris JR (2008) Differential equation approximations for Markov chains, Probability Surveys, 5, 37-79. doi:10.1214/07-PS121
\bibitem[D\'{e}barre et al(2006)]{DBR:07} D\'{e}barre F, Bonhoeffer S, Regoes RR (2007) The effect of population structure on the
emergence of drug resistance during influenza pandemics, Journal of the Royal Society Interface, 4, 893-906. doi:10.1098/rsif.2007.1126
\bibitem[Diekmann et al(2010)]{DHR:10} Diekmann O, Heesterbek JAP and Roberts MG (2010) The construction of next-generation matrices for compartmental epidemic models, Journal of the Royal Society Interface, 7, 873-885. doi:10.1098/rsif.2009.0386
\bibitem[Friedland(1981)]{Friedland:81} Friedland S (1981) Convex spectral functions, Linear and Multilinear Algebra, 9, 299-316. doi:10.1080/03081088108817381
\bibitem[Gao and Ruan(2012)]{GR:12} Gao D and Ruan S (2012) A multipatch malaria model with logistic growth populations, SIAM Journal of Applied Mathematics, 72, 819-841. doi:10.1137/110850761
\bibitem[Grenfell and Harwood(1997)]{GH:97} Grenfel Bl and Harwood J (1997) (Meta)population dynamics of infectious diseases, Trends in Ecology and Evolution, 12, 395-399. doi:10.1016/S0169-5347(97)01174-9
\bibitem[Gurarie and Seto(2009)]{GS:09} Gurarie D and Seto EYW (2009) Connectivity sustains disease transmission in environments with low potential for endemicity: modelling schistosomiasis with hydrologic and social connectivities, Journal of the Royal Society Interface, 6, 495-508. doi:10.1098/rsif.2008.0265
\bibitem[Hart(1988)]{Hart:88} Hart B (1988) Biological basis of the behavior of sick animals, Neuroscience \& Biobehavioral Reviews, 12,  123-137. doi:10.1016/S0149-7634(88)80004-6
\bibitem[Hastings(1983)]{Hastings:83} Hastings A (1983) Can spatial variation alone lead to selection for dispersal? Theoretical Population Biology, 24, 244-251. doi:10.1016/0040-5809(83)90027-8
\bibitem[Hess(1996)]{Hess:96} Hess G (1996) Disease in metapopulation models: implications for conservation, Ecology, 77, 1617-1632. doi:10.2307/2265556
\bibitem[Hsieh et al(2007)]{HvdDW:07} Hsieh Y-H, van den Driessch P and L Wang (2007) Impact of travel between patches for spatial spread of disease, Bulletin of Mathematical Biology, 69, 1355-1375. doi:10.1007/s11538-006-9169-6
\bibitem[Klepac et al(2012)]{KBMG:12} Klepac P, Bj{\o}rnstad ON, Metcalf CJE and Grenfell BT (2012) Optimizing reactive responses to outbreaks of immunizing infections: balancing case management and vaccination, PLOS One, 7, e41428. doi:10.1371/journal.pone.0041428
\bibitem[Klepac et al(2011)]{KLG:11} Klepac P, Laxminarayan R and Grenfell BT (2011) Synthesizing epidemiological and economic optima for control of immunizing infections, Proceedings of the National Academy of Sciences, 108, 14366-14370. doi:10.1073/pnas.1101694108
\bibitem[Kurtz(1970)]{Kurtz:70} Kurtz TG (1970) Solutions of ordinary differential equations as limits of pure jump Markov processes, Journal of Applied Probability, 7, 49-58. 
\bibitem[Lahodny and Allen(2013)]{LA:13} Lahodny Jr GE and Allen LJS (2013) Probability of a Disease Outbreak in Stochastic
Multipatch Epidemic Models, Bulletin of Mathematical Biology, 75, 1157-1180. doi:10.1007/s11538-013-9848-z
\bibitem[Mpolya et al(2014)]{MYOS:14} Mpolya EA, Yashima K, Ohtsuki H and Sasaki A (2014) Epidemic dynamics of a vector-borne disease on a villages-and-city star network with commuters, Journal of Theoretical Biology, 343, 120-126. doi:10.1016/j.jtbi.2013.11.024
\bibitem[Ndeffo Mbah and Gilligan(2010)]{NMG:10} Ndeffo Mbah ML and Gilligan CA (2010) Optimization of control strategies for epidemics in heterogeneous populations with symmetric and asymmetric transmission, Journal of Theoretical Biology, 262, 757-763. doi:10.1016/j.jtbi.2009.11.001
\bibitem[Ndeffo Mbah and Gilligan(2011)]{NMG:11} Ndeffo Mbah ML and Gilligan CA (2011) Resource allocation for epidemic control in metapopulations, PLOS One, 6, e24577. doi:10.1371/journal.pone.0024577
\bibitem[Pollett and Stefanov(2002)]{PS:02} Pollett PK and Stefanov VT (2002) Path integral for continuous-time Markov chains, Journal of Applied Probability, 39, 901-904. 
\bibitem[R Development Core Team(2011)]{R:2011} R Development Core Team (2011) R: A language and environment for
  statistical computing. R Foundation for Statistical Computing, Vienna, Austria.
\bibitem[Rowthorn et al(2009)]{RLG:09} Rowthorn RE, Laxminarayan R and Gilligan CA (2009) Optimal control of epidemics in metapopulations, Journal of the Royal Society Interface, 6, 1135-1144. doi:10.1098/rsif.2008.0402
\bibitem[Sanders et al(2012)]{SNvGR:12} Sanders J, Noble B, van Gorder RA and Riggs C (2012) Mobility matrix evolution for an SIS epidemic patch model, Physica A, 391, 6256-6267. doi:10.1016/j.physa.2012.07.023
\bibitem[Seneta(1981)]{Seneta:81} Seneta E, (1981) Non-negative matrices and Markov chains, 2nd Edition, Springer, New York
\bibitem[Stewart and Sun(1990)]{SS:90} Stewart GW and Sun J-G (1990) Matrix Perturbation Theory, Academic Press, Boston
\bibitem[Petrosjan and Zankevich(1996)]{PZ:96} Petrosjan LA and Zenkevich NA (1996) Game theory, World Scientific, Singapore
\bibitem[van den Driessche and Watmough(2002)]{vdDM:02} van den Driessche P and Watmough J (2002) Reproduction numbers and sub-threshold endemic equilibria for compartmental models of disease transmission, Mathematical Biosciences, 180, 29-48. doi:10.1016/S0025-5564(02)00108-6
\end{thebibliography}


\section*{Acknowledgements}
This research is supported in part by the Australian Research Council (Centre of Excellence for Mathematical and Statistical Frontiers, CE140100049)

\end{document}